\def\Power #1 { \powerset(#1) }
\def\Bidom #1 { {\mathfrak P} (#1) }
\newtheorem{definition}{{\bf Definition}}[section]
\newtheorem{theo}[definition]{{\bf Theorem}}
\newtheorem{cor}[definition]{{\bf Corollary}}
\newtheorem{proposition}[definition]{\noindent {\bf Proposition}}
\newtheorem{lem}[definition]{\noindent {\bf Lemma}}
\def\proofref #1 {{\noindent  {\bf Proof} (#1).}\ }
\def\endproof{\hfill {\kern 6pt\penalty 500
\raise -0pt\hbox{\vrule \vbox to5pt {\hrule width 5pt
\vfill\hrule}\vrule}}}
\def\centerpicture #1 by #2 (#3){\leavevmode
        \vbox to #2{
        \hrule width #1 height 0pt depth 0pt
        \vfill
        \special{pictfile #3}}}
\title[Absolute retracts of graphs]{Absolute retracts of reflexive oriented graphs: the role of the MacNeille completion\dagger}
\author[H-J. Bandelt]{Hans-J\"urgen Bandelt
}
\address{FB Mathematik, Universit\"at  Hamburg, Bundesstr. 55, D-20146 Hamburg, Germany.}
\email{bandelt@math.uni-hamburg.de}
\author[M. Pouzet]{Maurice Pouzet} \address{Univ Lyon, Universit\'e Claude Bernard Lyon 1, CNRS UMR 5208, Institut Camille Jordan, 43 blvd. du 11 novembre 1918, F-69622 Villeurbanne cedex, France and Mathematics \& Statistics Department, University of Calgary, Calgary, Alberta, Canada T2N 1N4}
 \email{pouzet@univ-lyon1.fr }
 \author[F. Sa\"{\i}dane]{Faouzi Sa\"{\i}dane} \address{D\'epartement de Math\'ematiques, Facult\'e des Sciences, 2100 Gafsa, Tunisie}
 \email{faouzisaidane22@yahoo.fr}
\keywords{ absolute retracts, oriented graphs,  MacNeille completion, dual quantal, generalized metric}
\subjclass[2020]{06A07,06A12,06D22,08B30, 68R10}
\date{\today}
\begin{document}

\dedicatory {\dagger \; Dedicated to the memory of Maurice Nivat}

\begin{abstract} We characterize the absolute retracts in the category of reflexive oriented graphs, that is, antisymmetric reflexive graphs, where morphisms between objects preserve arcs (which may be sent to loops). Here we show,  by correcting a much earlier attempt at a proof,  that a reflexive oriented graph is an absolute retract if and only if it is indeed a retract of some (direct) product of reflexive oriented zigzags (which are concatenations of reflexive oriented paths). Absolute retracts are therefore necessarily acyclic. In contrast to other categories of graphs and ordered sets, not every acyclic oriented graph can be embedded isometrically into some absolute retract. Embedding involves isometry with respect to the zig-zag  distances forming a particular "dual quantal", which is a complete lattice of certain sets of words over the alphabet $\{+, -\}$, endowed with an additional monoid operation (viz., compound concatenation of sets of words) and an involution (interchanging $+$ and $- $ and then mirroring words). As reflexive oriented zigzags have MacNeille-closed distances, so do their products and retracts. So, the category of reflexive oriented graphs and its full subcategory of reflexive acyclic graphs do not have enough injectives, as the injective objects coincide with the absolute retracts.
\end{abstract}

\maketitle

\section{Introduction}

This article is about retracts of reflexive oriented graphs, that is, oriented graphs with loops. "Oriented" is a shorthand for "antisymmetric directed". The reason for requiring a loop at every vertex is simply to permit homomorphisms sending two distinct vertices linked by an arc to a single vertex. For brevity we will henceforth  assume that \emph{all directed graphs in this paper are understood as having a loop at each vertex}.
The scenario for retracts in oriented graphs bears some resemblance with the undirected case of reflexive graphs, which was studied many years ago \cite{NoRi}, \cite{quilliot1, quilliot2}. The latter structures, simply referred to as graphs here  are pairs $G:= (V, \mathcal E)$, where $V$ is the set of \emph{vertices}, $\mathcal E$ is the set of \emph{edges}; each edge is an unordered pair $\{x,y\}$ of vertices, which is a  \emph{loop} if $x=y$.  Given a  graph $G$, we denote by $V(G)$ its set of vertices and $\mathcal E(G)$ its set of edges.

A \emph{homomorphism} $f$ from $G$ to $G'$ is a map $f: V(G) \rightarrow V(G')$ such that the image $\{f(x),f(y)\}$ of any  edge  $\{x,y\}$ is an edge and thus may be  a loop.
A graph $G$ is a \emph{retract} of $G'$ if there are homomorphisms $f$ from $G$ to $G'$ and $g$ from  $G'$ to $G$ such that the composition $g\circ f$ is the identity map on $G$. The map $f$ is called a \emph{coretraction} and the map $g$ a \emph{retraction}.
A coretraction $f
$ is necessarily an \emph{embedding}, that is,  for every pair of distinct vertices, the pair  $\{x,y\}$ is an edge exactly when  its image consists of distinct vertices forming  an edge. Graphs for which embeddings are coretractions are the so-called \emph{central graphs} (graphs containing a vertex linked to every other vertex).  Since not every graph is central, not every embedding is a coretraction in general. In fact, coretractions are necessarily isometric embeddings with respect to the \emph{graphic distance} (defined  as the length $d_G(x,y)$ of the shortest path joining vertices $x$ and $y$ in $G$ if there is any and $\infty$ if is are none), that is,  $d_{G'}(f(x),f(y))=d_G(x,y)$ for every pair  of vertices $x,y$ of $G$. Graphs for which isometric embeddings are coretractions are called \emph{absolute retracts}. These graphs have been characterized  as retracts of product of paths, see  Quilliot \cite{quilliot1, quilliot2},  Nowakowski  and Rival \cite {NoRi}.

The study of retracts of directed graphs  started with the work of Quilliot \cite{quilliot1}.
The oriented versions of undirected paths (with loops) are "zigzags". Coding
forward arcs by $"+"$ and backward arcs by $"-"$ we can associate to every zigzag some word over the alphabet $\Lambda:= \{+,-\}$.  For an example, the zigzag from $a$ to $b$ shown in Figure \ref{orientzigzag}  receives the
code $+++--+--$ and the reverse zigzag from $b$ to $a$ is coded by
$++-++---$.
The distance $d_G(x,y)$ between two vertices $x$ and $y$ of a directed graph $G:= (V,\mathcal E)$ is no longer an integer (or $+\infty$), but a certain set of words over the alphabet $\Lambda:= \{+, -\}$. Each word  $\alpha:= a_0\cdots a_i\cdots a_{n-1}$ codes a  sequence $z_0:=x, \dots, z_i, \dots , z_n:= y$ of vertices such that $(z_i,z_{i+1})\in \mathcal E$ if $a_i=+$ and $(z_{i+1},z_{i})\in \mathcal E$ if $a_i=-$. Such an oriented graph is called an \emph{oriented zigzag} (or just \emph{zigzag}, for short). In the more general class of directed graphs, zigzags may have a forward and a backward edge for a pair of distinct vertices; see Figure \ref{directzigzag}.
 We also call \emph{zigzags} their images and \emph{zigzag distance} the set $d_G(x,y)$.
Since the graphs we consider are reflexive, we may repeat any vertex  in a zigzag path, hence we may insert letters in its code.
Thus, zigzag distances are closed under the formation of superwords. The zigzag distance from a vertex to itself is defined as the set of all words (including the empty word). With this notion, one may define isometries and then absolute retracts.  A complete description of absolute retracts was given in \cite{kabil-pouzet}.

\begin{figure}[h]
\centering
\includegraphics[width=0.8\textwidth]{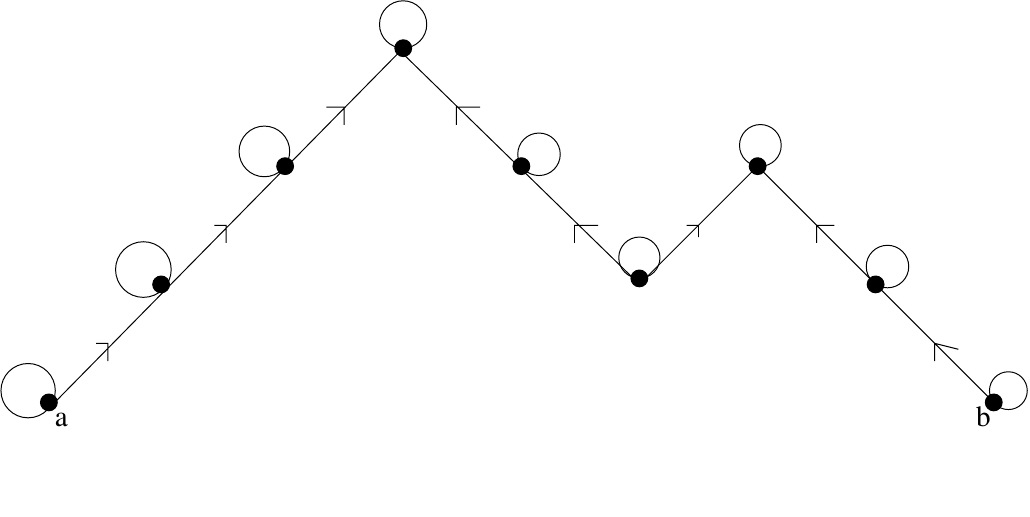}
\caption{A reflexive oriented zigzag}
\label{orientzigzag}
\end{figure}

\begin{figure}[h]
\centering
\includegraphics[width=0.8\textwidth]{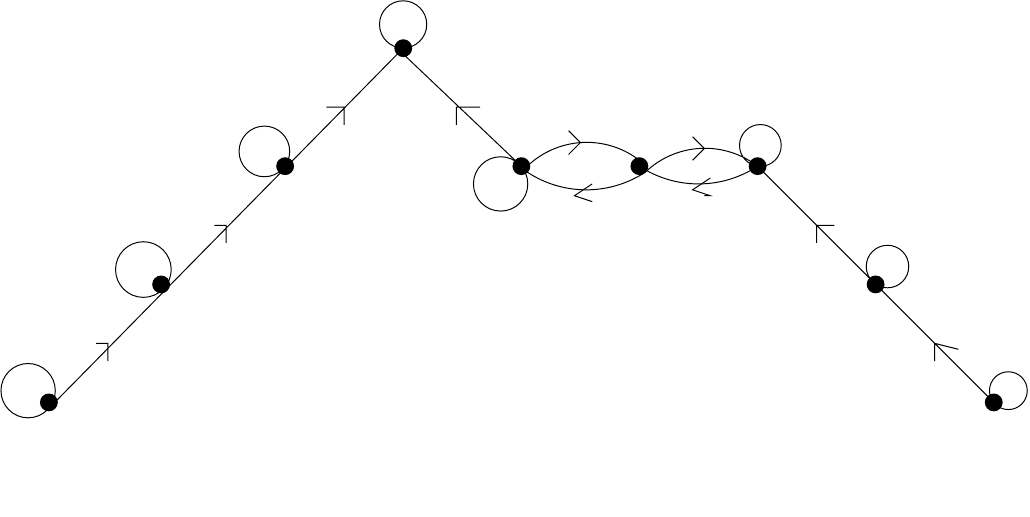}
\caption{A reflexive directed zigzag}
\label{directzigzag}
\end{figure}

\begin{figure}[h]
\centering
\includegraphics[width=0.8\textwidth]{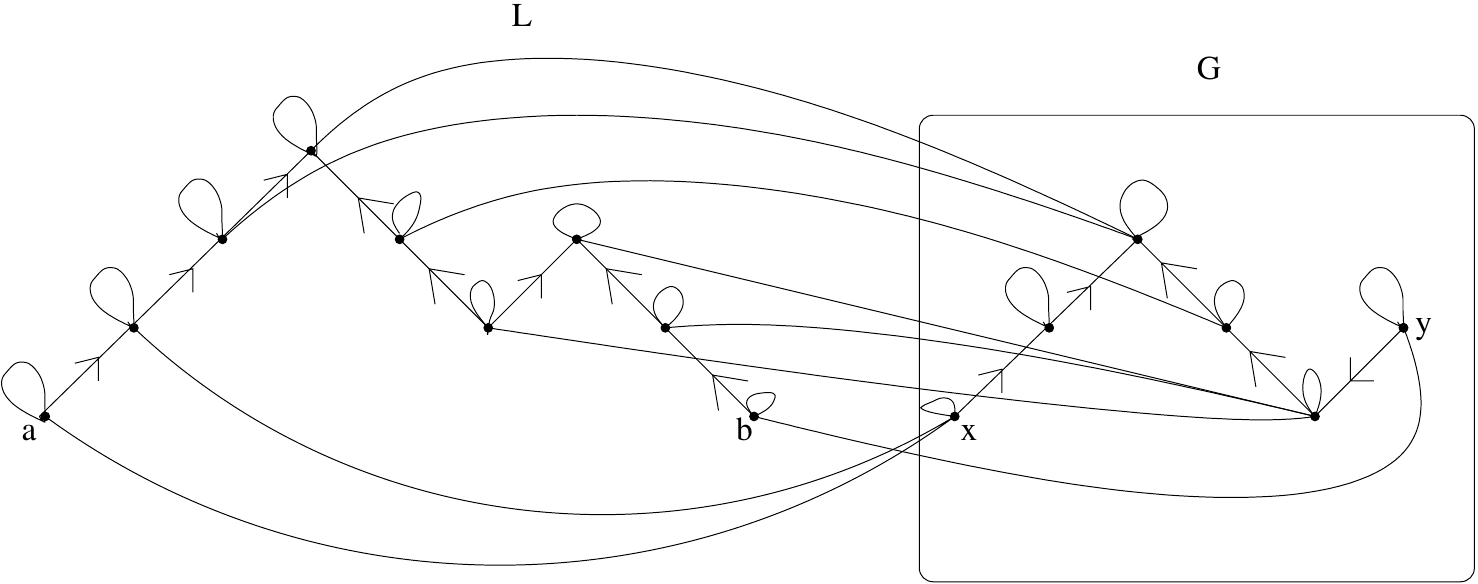}
\caption{A morphism of an oriented zigzag into a directed graph}
\label{figure-B-P-S-2019}
\end{figure}

\subsection{Oriented graphs and zigzags}
Consider now oriented graphs (with loops): i.e., directed graphs with loops in which every pair of vertices is linked by at most
one arc.
Basic instances are transitive oriented graphs, which are just ordered sets, alias posets.  In this case, graph homomorphisms become order preserving maps. Again, coretractions are embeddings, and ordered sets  for which embeddings are coretractions are complete lattices (Banaschewski and Bruns \cite {banaschewski-bruns}). For non-complete lattices, we may adapt the notion of graphic distance. Observe that a path has two orientations turning it into an ordered set,  called \emph{fence}. In an ordered set, say $P$, define the distance $d_P(x,y)$ from $x$ to $y$  as $\infty$  if no fence connects $x$ to $y$, otherwise among the fences $x:= a_0<a_1>a_2\dots$ going from $x$ to $y$ select the length, say $n$, of the shortest one; do the same for the fences $x:= a_0>a_1<a_2\dots$, let $m$ be the length of the shortest and set $d_P(x,y) = (n,m)$. Ordered sets  for which isometric embeddings are coretractions  are called absolute retracts; they have been  characterized as retracts of product of fences by Nevermann and  Rival, \cite{nevermann-rival} (see also \cite{JaMiPo}).

\subsection{Organization of the article} Our main result (Theorem \ref{theo-antisym})  asserts that  among oriented graphs the absolute retracts with respect to isometries are retracts of products of zigzags. Here a map $f$ from an oriented graph $G$ into an oriented graph $G'$ qualifies as an isometry exactly when for any distinct vertices $a$ and $b$ of $G$ the zigzag distance from $a$ to $b$ in $G$ equals the one from $f(a)$ to $f(b)$ in $G'$.
These absolute retracts enjoy other equivalent categorical properties also satisfied by graphs and ordered sets: \emph{injectivity}, \emph{$2$-Helly property} as well as the \emph{extension property}.

Despite the similarity with the two results mentioned above for graphs and ordered sets, there is an essential difference at the level of proofs. For the first two results, the set of distance values  can be endowed with a graph  structure  rendering  it an absolute retract such that each of our graphs can be isometrically embedded into a power of that graph. This approach cannot work for  oriented graphs. Indeed,  it is not true that in the  class of oriented graphs (with loops)  every graph embeds into an absolute retract.  For instance, an
oriented graph containing a directed cycle of length larger than $2$ cannot
be embedded in such an absolute retract.

In the proof we regard an oriented graph as a generalized metric space with respect to the zigzag distance. The basic ingredients of this approach are presented  in Sections 2 and 3. We refer to \cite{JaMiPo} and to \cite{kabil-pouzet2} for generalized metric spaces; the reader could compare with  \cite{espinola-khamsi} for  ordinary metric spaces.  Directed graphs appear in Section 4 as generalized metric spaces.  Then, in Section 5,  it is shown that the distance values in an absolute retract belong to the MacNeille completion of the ordered set of words over the alphabet ${+,-}$. The absolute retract can then be embedded isometrically into a product of zigzags. The main result is stated in Section 6.

\section{Dual integral involutive quantales}

Values of the zigzag distance on a directed graph are particular subsets of $\Lambda^{*}$   and, precisely,   final segments of $\Lambda^{*}$, once this set is equipped with the subword ordering. It turns out that several categorical properties of the class of directed graphs are mirrored in the set $\mathbf{F}(\Lambda^{*})$ of final segments of $\Lambda^{*}$,  the  reservoir for potential distances,  provided that this set  is
  equipped with the reverse order of inclusion, the concatenation of languages and an involution. This structure is then a  the dual of an integral involutive quantal,  and  directed graphs  can be viewed as metric spaces over it.

\subsection{The dual quantale $\mathbf F(\Lambda^{*})$}
Words over the alphabet  $\Lambda:= \{+,-\}$ are finite sequences of letters. We write a word $u$ with a
mere juxtaposition of its letters as $u:=
a_{0}\cdots a_{n-1}$ where $a_{i}$
are letters from $\Lambda$ for 0 $\leq i\leq n-1.$ The integer $n$ is the \textit{%
length} of the word $u$ and we denote it $\left| u \right| $.
Hence we identify letters with words of length $1$. We denote by $\Box $ the
\emph{empty word}, which is the unique  word of length zero.
The \emph{concatenation} of two word $u:= a_{0}\cdots a_{n-1}$ and $v:=b_{0}\cdots b_{m-1}$ is the word $uv:=a_{0}\cdots a_{n-1}b_{0}\cdots b_{m-1}$. We denote by $\Lambda^{*}$ the set of all words on the alphabet $\Lambda$. Once equipped with the
concatenation of words, $\Lambda^{*}$ is a monoid, whose neutral element is the empty word, in fact $\Lambda^{\ast}$ is the \textit{free
monoid} on $\Lambda$.
 On $\Lambda:= \{+,-\}$ one has the natural involution $-$ that interchanges the two letters. This involution can be lifted to $\Lambda^{*}$ by  reversing every word  and involuting its letters.
The monoid $\Lambda^{*}$ can be ordered by the \emph{subword ordering}: a word $u$ is a subword of $v$ if $u$ can be obtained from $v$ by deleting some letters). With this order, $\Lambda^{*}$ becomes an ordered monoid.

Involution and concatenation of words carry over to sets of words by word-wise application of the operations: If $X, Y \in \powerset (\Lambda^{\ast})$ the \emph{concatenation} of $X$ and $Y$ is the set  $XY:= \{uv: u\in X, v\in Y\}$.  This  operation  extends the concatenation operation on $\Lambda^{\ast}$. The involution  on $\Lambda^{\ast}$ extends  to $\powerset (\Lambda^{\ast})$ by setting $\overline X:= \{\overline {u}: u\in X\}$, it reverses the concatenation of languages and preserves the inclusion order on languages.  Final segments (or upper sets) are subsets $F$ of $\Lambda^{*}$ such that $u\in F$ and $u\leq v$ imply $v\in F$.
 The set $\mathbf{F}(\Lambda^{*})$ is a complete lattice with respect to inclusion. Then concatenation trivially distributes over  arbitrary unions on either side. The top set $\Lambda^{*}$ serves as a neutral element (unit) for concatenation on $\mathbf{F}(\Lambda^{*})$ because $\Lambda^{*}$  contains the empty word.

These properties thus describe $\mathbf{F}(\Lambda^*)$ - preliminarily endowed with inclusion as the ordering - as  an \emph{integral involutive quantale}. For the interpretation as distance values, however, we need to reverse the order and hence to put this structure upside down. Then we actually deal with the dual of an integral involutive quantale, so that intersection, concatenation, involution and $\Lambda^{*}$  as a zero become the fundamental operations.

In general, a  \emph{dual integral involutive quantale}  (${DI^2}$-\emph{quantale}, for short) is a complete lattice  $\mathcal D$ with  least element $0$ and  greatest
element $1$, equipped with a monoid operation $\oplus$ and an involution $^-$ such that:
The element 0 is the  neutral element of the operation;
the involution is order-preserving and reverses the monoid
operation, that is $$\overline {p\oplus q} = \bar {q} \oplus \bar {p}\;\;{\rm  holds}
\;\;{\rm for\; every\;}\; p, q \in \mathcal D, $$ and
the following distributivity condition holds for any index sets $A$ and $B$:
\[\bigwedge  _{\alpha \in A, \beta \in B} (p_\alpha \oplus q_\beta) =
\bigwedge _{\alpha \in A} p_\alpha  \oplus \bigwedge _{\beta \in B} q_\beta \]
for all $p_\alpha \in {\mathcal D}$ $(\alpha \in A)$ and
$p_{\beta} \in {\mathcal D}$ $(\beta \in B)$ or equivalently (because of the
involution)
\[\bigwedge _{\alpha \in A} (p_\alpha \oplus q)  =
\bigwedge _{\alpha \in A} p_\alpha\; \oplus\;  q \]
for all $p_\alpha \in {\mathcal D}$ $(\alpha \in A)$ and $q\in {\mathcal D}$. Note
that this distributivity condition entails the fact that the  monoid
operation and the ordering are compatible (that is  that is
$p\leq p^{'}$ and $q \leq q^{'}$ imply $p\oplus q\leq p^{'}\oplus q^{'}$).

We have borrowed this terminology from  Kaarli and Radeleczki  \cite{kaarli-radeleczki}. Elsewhere, $DI^2$-quantales were introduced under the name of Heyting algebra by the second author in \cite{pouzet}, see \cite{JaMiPo}, \cite{pouzet-rosenberg} or involutive Heyting algebras (\cite{kabil-pouzet}) in the context of generalized metric spaces.

The set   $\mathbf{F}(\Lambda^*)$ ordered by reverse of inclusion, with the concatenation of languages (that we do not denote by $\oplus$) and involution is a basic example of ${DI^2}$-{quantale}. There is an other that we introduce now.
\subsection{The MacNeille completion of the monoid of words}

We extend the  ordered monoid $\Lambda^*$  to a complete lattice-ordered monoid by applying the MacNeille completion,  which is in  some sense the least complete lattice extending $\Lambda^{\ast}$. The definition goes as follows. If $X$ is a subset of $\Lambda^{\ast}$ ordered by the subword ordering then

 $$X^{\Delta}:= \bigcap_{x \in X} \uparrow x\;  \text{and}\; X^{\nabla}:= \bigcap_{x \in X} \downarrow x$$
are respectively  the {\it upper cone} generated by $X$  and
 the {\it lower cone} generated by $X$, where $\uparrow  x$ and $\downarrow x$ denote the set of superwords and the set of subwords of $x$, respectively. The pair $(\Delta, \nabla)$ of maps on the complete
lattice of subsets of  $\Lambda^{\ast}$ constitutes a Galois connection. Thus,  a set $Y$ is an upper cone  if
and only if $Y = Y^{\nabla \Delta}$, while a set $W$ is
a lower cone if and only if $W = W^{\Delta \nabla}.$ This Galois connection
$(\Delta, \nabla)$ yields the {\it MacNeille completion} of
$\Lambda^{\ast}.$ This completion is realized  as the complete
lattice $\{W^{\nabla}:  W\subseteq \Lambda^{\ast}\}$
ordered by inclusion or $\{Y ^{\Delta} : Y\subseteq \Lambda^{\ast}\}$ ordered by reverse inclusion. In this paper,  we choose as completion the latter representation
denoted by $\mathbf {N}(\Lambda^{\ast})$. This complete lattice is studied in detail in \cite{bandelt-pouzet}.

We recall the important fact that the sets of the form  $W^{\nabla}$ for $W$ nonempty coincide with the nonempty finitely generated initial segments of $\Lambda^{\ast}$:

\begin{lem}\label{lem:jullien}Jullien \cite{jullien}The set $\mathbf {N}(\Lambda^{\ast})\setminus \{\emptyset\}$ is order isomorphic to the distributive lattice $\mathbf I_{<\omega}(\Lambda^{\ast})\setminus \{\emptyset\}$ ordered by inclusion  and comprising  the finitely generated initial segments of $\Lambda^{\ast}$. \end{lem}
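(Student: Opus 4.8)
The plan is to realise $\mathbf{N}(\Lambda^{*})$ concretely as the inclusion-ordered set of lower cones $\{X^{\nabla}:X\subseteq\Lambda^{*}\}$: by the triangle identities of the Galois connection $(\Delta,\nabla)$, the map $U\mapsto U^{\nabla}$ is an order isomorphism from $\{Y^{\Delta}:Y\subseteq\Lambda^{*}\}$ with reverse inclusion (the paper's representation) onto this set with inclusion, and it carries the top element $\emptyset$ of the former to the top lower cone $\Lambda^{*}=\emptyset^{\nabla}$. Thus ``$\mathbf{N}(\Lambda^{*})\setminus\{\emptyset\}$'' just means ``the lower cones other than $\Lambda^{*}$''. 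The only external ingredient used is Higman's lemma: $(\Lambda^{*},\leq)$ with the subword ordering is a well-quasi-order, since the alphabet $\Lambda$ is finite.

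First I would extract the consequences of Higman's lemma. There are no infinite antichains and no infinite strictly descending chains in $\Lambda^{*}$, so every upper set equals $\uparrow F$ for a finite antichain $F$; and since each $\downarrow u$ is finite, a finitely generated initial segment is precisely a finite one, namely $\downarrow F$ with $F$ its finite set of maximal elements. For any $X\subseteq\Lambda^{*}$ one checks $X^{\nabla}=(\uparrow X)^{\nabla}$, so writing $\uparrow X=\uparrow F$ with $F$ finite gives $X^{\nabla}=\bigcap_{f\in F}\downarrow f$. Consequently every lower cone has this form: it is $\Lambda^{*}$ when $F=\emptyset$, and a nonempty finite initial segment when $F\neq\emptyset$. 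Hence $U\mapsto U^{\nabla}$ already embeds $\mathbf{N}(\Lambda^{*})\setminus\{\emptyset\}$, order-preservingly, into $\mathbf{I}_{<\omega}(\Lambda^{*})\setminus\{\emptyset\}$.

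The substance of the statement is the reverse inclusion: every nonempty finite initial segment $I$ is a lower cone, that is, $I=I^{\Delta\nabla}$. If $m_{1},\dots,m_{k}$ ($k\ge1$) are the maximal elements of $I$, then $I^{\Delta\nabla}=\bigcap\{\downarrow h: h$ a common superword of $m_{1},\dots,m_{k}\}$, and since $I\subseteq I^{\Delta\nabla}$ always, it suffices to prove the combinatorial fact (essentially Jullien's): \emph{if $w\not\leq m_{j}$ for every $j$, then some common superword $h$ of $m_{1},\dots,m_{k}$ satisfies $w\not\leq h$.} I would argue by induction on $|w|$. The case $w=\Box$ is impossible (then $w\le m_{1}$), so write $w=aw'$ with $a\in\Lambda$. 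If $a$ occurs in no $m_{j}$, each $m_{j}$ is a power $\bar a^{\,e_{j}}$ and $h:=\bar a^{\,C}$ with $C=\max_{j}e_{j}$ works. Otherwise set $J_{1}=\{j:a$ occurs in $m_{j}\}\neq\emptyset$ and, for $j\in J_{1}$, factor $m_{j}=\bar a^{\,c_{j}}\,a\,s_{j}$ at the first occurrence of $a$; since the first $a$ of $\bar a^{\,c}\,a\,s$ is the explicit one, $aw'\leq\bar a^{\,c}\,a\,s$ is equivalent to $w'\leq s$, so $w\not\leq m_{j}$ yields $w'\not\leq s_{j}$ for all $j\in J_{1}$, and $J_{1}\neq\emptyset$ forces $w\neq a$, hence $w'\neq\Box$. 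The inductive hypothesis applied to $w'$ and the nonempty family $\{s_{j}:j\in J_{1}\}$ produces a common superword $h'$ of the $s_{j}$ with $w'\not\leq h'$. Putting $C=\max\bigl(\{c_{j}:j\in J_{1}\}\cup\{e_{j}: a\text{ absent from }m_{j}\}\bigr)$ and $h:=\bar a^{\,C}\,a\,h'$, each $m_{j}$ embeds into $h$ (via $\bar a^{\,c_{j}}\,a\,s_{j}\leq\bar a^{\,C}\,a\,h'$ or $\bar a^{\,e_{j}}\leq\bar a^{\,C}$), while $aw'\leq\bar a^{\,C}\,a\,h'$ would force $w'\leq h'$, which is false; hence $w\not\leq h$, completing the induction.

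Assembling the pieces, the lower cones of $\Lambda^{*}$ are exactly $\Lambda^{*}$ together with the nonempty finite ($=$ finitely generated) initial segments, so $U\mapsto U^{\nabla}$ restricts to an order isomorphism $\mathbf{N}(\Lambda^{*})\setminus\{\emptyset\}\cong\mathbf{I}_{<\omega}(\Lambda^{*})\setminus\{\emptyset\}$ for inclusion. Finally, $\mathbf{I}_{<\omega}(\Lambda^{*})$ is a sublattice of the distributive lattice of all subsets of $\Lambda^{*}$ — it is closed under finite unions, since $\downarrow F_{1}\cup\downarrow F_{2}=\downarrow(F_{1}\cup F_{2})$, and under finite intersections, an intersection of finite initial segments being a finite initial segment — hence it is distributive; the combinatorial fact moreover shows that these unions and intersections are lower cones, so they coincide with joins and meets taken in $\mathbf{N}(\Lambda^{*})$. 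The one genuine obstacle is the inductive construction in the third paragraph; everything else is routine bookkeeping with the Galois connection and with standard consequences of Higman's lemma.
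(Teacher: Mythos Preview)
Your proof is correct. The paper does not actually prove this lemma: it merely records it as a known result and cites Jullien~\cite{jullien}. You have supplied a complete self-contained argument, reducing the statement to the combinatorial fact that any word avoiding each of finitely many words $m_1,\dots,m_k$ also avoids some common superword of the $m_j$, and proving this by a clean induction on word length using the first-letter factorisation $m_j=\bar a^{\,c_j}a\,s_j$. The surrounding bookkeeping---identifying $\mathbf{N}(\Lambda^*)$ with the lower cones via the Galois connection, invoking Higman's lemma to see that every upper set is finitely generated and hence every proper lower cone is a finite initial segment, and checking distributivity---is all handled correctly. One could quibble that the remark ``$J_1\neq\emptyset$ forces $w\neq a$'' is not strictly needed (the induction hypothesis holds vacuously at $w'=\Box$), but it does no harm and clarifies why $w'\not\leq s_j$ is non-trivially satisfied.
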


 The concatenation, order and involution defined on $\mathbf {F}(\Lambda^{\ast})$ realize  it as a  $DI^{2}$-subquantale on the MacNeille completion  $\mathbf {N}(\Lambda^{\ast})$ of $\Lambda^*$(see Proposition 2.2 of \cite{bandelt-pouzet} and Section 3 of \cite{kabil-pouzet-rosenberg}). We recall the following syntactic characterization of  $\Lambda^*$.
\begin{proposition}\label{cancellation} \cite{bandelt-pouzet} Corollary 4.5.
A member  $Y$ of $\mathbf{F}(\Lambda^{*})$ belongs to $\mathbf {N}(\Lambda^{*})$ if and only if it satisfies the following cancellation rule:
if $u + v \in Y$ and $u - v \in Y$ then $u v \in Y$.
\end{proposition}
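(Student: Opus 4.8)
The plan is to verify the two directions of the equivalence directly from the definitions, using the Galois-connection machinery for the MacNeille completion already set up in the excerpt. So we must show: (i) every $Y\in\mathbf N(\Lambda^{*})$ satisfies the cancellation rule; and (ii) conversely, if $Y\in\mathbf F(\Lambda^{*})$ satisfies the cancellation rule, then $Y=Y^{\nabla\Delta}$, i.e.\ $Y$ is an upper cone. Recall we are using the representation $\mathbf N(\Lambda^{*})=\{Z^{\Delta}:Z\subseteq\Lambda^{*}\}$ ordered by reverse inclusion; a set $Y$ lies in $\mathbf N(\Lambda^{*})$ iff $Y=Y^{\nabla\Delta}$. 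Throughout, $\uparrow x$ denotes the set of superwords of $x$.

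**Direction (i): members of the completion cancel.** First I would reduce to the generators. By Lemma~\ref{lem:jullien} (Jullien), a nonempty $Y\in\mathbf N(\Lambda^{*})$ has the form $Y=\bigcap_{i=1}^{k}\uparrow x_i$ for finitely many words $x_1,\dots,x_k$ (the empty case is trivial). Since the cancellation rule is preserved under arbitrary intersections — if each $Y_j$ satisfies it, so does $\bigcap_j Y_j$ — it suffices to check that a single principal upper cone $\uparrow x$ satisfies cancellation. So suppose $u+v\in\,\uparrow x$ and $u-v\in\,\uparrow x$; we must show $uv\in\,\uparrow x$, i.e.\ $x$ is a subword of $uv$. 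Here the combinatorial core is an embedding argument on words: fix an embedding of $x$ as a subword of $u+v$ and one of $x$ as a subword of $u-v$. Split $x$ at the position corresponding to the displayed $+$ (resp.\ $-$): write $x=x'\,a\,x''$ along the first embedding where $a$ is the letter sitting at (or spanning across) the $+$ slot. The point is that the prefix $x'$ embeds into $u$ and the suffix $x''$ embeds into $v$ in at least one of the two scenarios — because the letter $a\in\{+,-\}$ cannot simultaneously be forced onto the $+$ of the first word and the $-$ of the second unless it can be absorbed into $u$ or into $v$ on one side. A short case analysis on whether the "crossing" letter of $x$ is a $+$ or a $-$ then yields that $x$ is a subword of $uv$. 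I expect this to be the one genuinely delicate step, though it is elementary; it is essentially the observation underlying the cancellation rule.

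**Direction (ii): cancellation forces being an upper cone.** Suppose $Y\in\mathbf F(\Lambda^{*})$ satisfies the cancellation rule; I want $Y^{\nabla\Delta}\subseteq Y$ (the reverse inclusion $Y\subseteq Y^{\nabla\Delta}$ is automatic from the Galois connection). Take $w\in Y^{\nabla\Delta}$, meaning every lower bound in $\Lambda^{*}$ of $Y$ (every common subword of all members of $Y$) is a subword of $w$ — equivalently, $w\in\,\uparrow x$ for every $x$ that is a subword of all elements of $Y$. I would argue by contradiction: if $w\notin Y$, produce a common subword $x$ of $Y$ that is not a subword of $w$. Since $Y$ is a final segment not containing $w$, for each $y\in Y$ pick a witness of "$y\not\le w$"; the task is to cook up, using finitely many such witnesses and repeated application of cancellation, a single word that is below everything in $Y$ yet not below $w$. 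This is where cancellation does real work: it is exactly the closure property that lets one merge two "branches" $u+v$ and $u-v$ into their common refinement $uv$, so that the family of common lower bounds of $Y$ is rich enough to pin down membership. Concretely, I would use Lemma~\ref{lem:jullien} in the other direction: the finitely generated initial segment $Y^{\nabla}$ (if nonempty) has finitely many maximal elements, and cancellation on $Y$ translates into a structural constraint on these maximal words that makes $Y^{\nabla\Delta}=Y$ fall out. The bookkeeping of which finite subset of $Y$ to use is the main obstacle here; the underlying principle is again the single word-embedding lemma from Direction (i).

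**Remark on strategy.** Both directions funnel through the same elementary fact about subword embeddings — that $x\le u+v$ and $x\le u-v$ together imply $x\le uv$ — so I would isolate that as a preliminary sublemma and then feed it into (i) via intersections and into (ii) via the contrapositive. Since the statement is credited to \cite{bandelt-pouzet}, Corollary~4.5, one may alternatively simply cite that source; the sketch above is how one would reconstruct it.
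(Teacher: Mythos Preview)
The paper does not prove this proposition at all: it is simply quoted from \cite{bandelt-pouzet}, Corollary~4.5, and used as a black box. So there is no ``paper's own proof'' to compare against beyond the citation, which you yourself note at the end of your proposal.

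Your Direction~(i) is essentially correct. The reduction via Lemma~\ref{lem:jullien} to principal cones $\uparrow x$ is sound, and the sublemma ``$x\le u+v$ and $x\le u-v$ imply $x\le uv$'' is true. A clean way to see it: an embedding of $x$ into $u+v$ either avoids the displayed $+$ (then $x\le uv$ outright) or hits it, giving $x=p+q$ with $p\le u$ and $q\le v$; similarly for $u-v$, yielding $x=p'-q'$ with $p'\le u$, $q'\le v$. In the remaining case compare $|p|$ and $|p'|$: they differ (the letters at those positions of $x$ are $+$ and $-$), and if, say, $|p|<|p'|$ then $p+$ is a prefix of $p'$, so $p+\le u$, whence $x=(p+)q\le uv$. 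Your description of this step is loose but the idea is right.

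Your Direction~(ii), however, is not a proof. You correctly identify the target $Y^{\nabla\Delta}\subseteq Y$ and observe via Jullien that $Y^{\nabla}$ is finitely generated, but then the argument stops: ``cancellation on $Y$ translates into a structural constraint on these maximal words that makes $Y^{\nabla\Delta}=Y$ fall out'' names no constraint and gives no mechanism. Knowing the finitely many maximal common subwords $x_1,\dots,x_k$ of $Y$ does not by itself tell you why a word $w$ containing each $x_i$ must lie in $Y$; this is precisely the substance of the implication, and it genuinely requires work (in \cite{bandelt-pouzet} it occupies several pages and passes through an auxiliary normal form for final segments). Your own sentence ``the bookkeeping of which finite subset of $Y$ to use is the main obstacle here'' is an honest admission that the argument is missing. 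As written, Direction~(ii) is a plan with a hole at the crucial step; for the purposes of this paper, citing \cite{bandelt-pouzet} as the paper does is the right move.
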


\section{The category of metric spaces over a dual integral involutive quantale}

With the zigzg distance, oriented  graphs and, more generally directed graphs yield     metric spaces over the $DI^2$-quantale $\mathbf {F}(\Lambda^{\ast})$ of final segments of $\Lambda^{*}$. In some cases, the metric is defined over the smaller $DI^2$-quantale $\mathbf {N}(\Lambda^{\ast})$ constituting the MacNeille completion of $\Lambda^{*}$.

We define a  \emph{metric},  alias \emph{distance},  $d$ on a set $V$  over a given $DI^2$-quantale $\mathcal D$ as a map
$d : V\times V \rightarrow {\mathcal D}$ satisfying the following properties
for all $x,y,z \in V$:

$\bullet$ $d(x,y)\;=\;0$ if and only if $x\;=\;y$,

$\bullet$  $d(x,y)\; \leq \;d(x,z)\;\oplus \;d(z,y)$,

$\bullet$ $d(x,y)\;=\;\overline{d(y,x)}$.

The pair $(V,d)$ is said to be a {\it (generalized) metric space} over $\mathcal D$. If the metric is fixed in a given context, then we simply speak of the metric space $V$ over $\mathcal D$.
In this space,  define the  \emph{ball}  with \emph{center} $x\in V$ and \emph{radius}
$r\in \mathcal D$ as
the set $B_V(x,r)=\{y \in V:d(x,y)\leq r\}$; if there is no danger of
confusion we will denote it $B(x,r)$ instead of $B_V(x,r)$.

If $(V,d)$ and $(V^{'},d^{'})$ are two metric spaces over $\mathcal D$,
then a map
$f : V \rightarrow V^{'}$ is {\it non-expansive} (also said sometimes {\it contracting})
provided that $d^{'}\left(f(x),f(y)\right)
\leq d(x,y)\;for\;all\;x,y\in V$.
If  equality holds for all $x,y\in V$, then $f$ is an
{\it isometry}. Hence in our terminology an isometry is not necessarily
surjective. We say that $V$ and $V^{'}$ are {\it isomorphic} if there is a surjective isometry from $V$ onto $V^{'}$.
If $V$ is a subset of $V^{'}$ and the identity map is non-expansive, we
say that $(V,d)$ is a {\it subspace} of $(V^{'},d^{'})$, or that
$(V^{'},d^{'})$ is an {\it extension } of $(V,d)$. If, moreover, this map is
an isometry (that is,  $d$ is the restriction of $d^{'}$ to $V \times V$),
then we call $(V,d)$ an {\it isometric subspace} of $(V^{'},d^{'})$ and
$(V^{'},d^{'})$ is an {\it isometric extension} of $(V,d)$.
As usual, $Hom(V,V^{'})$
denotes the set of all non-expansive maps from $V$ to $V^{'}$
(i.e. morphisms in the corresponding category).\\
\subsection{Absolute retracts in the category of metric spaces over $\mathcal D$}
We denote by $\mathcal M_{\mathcal D}$ the  category   of metric spaces  over $\mathcal D$,  with the non-expansive maps as morphisms. Due to the fact that joins exist in $\mathcal D$, this  category  has products. If $\left ((V_i,d_i)\right)_{i\in I}$ is a family of metric spaces over
$\mathcal D$, the direct product $(V,d) =
\displaystyle \prod_{i\in I}(V_i,d_i)$, is the cartesian product
$V= \displaystyle \prod_{i\in I} V_i$, equipped with the "sup"
(or $\ell^{\infty}$) distance $d : V\times V \to {\mathcal D}$ defined by:
\[d\Big((x_i)_{i\in I},(y_i)_{i\in I} \Big) =
\bigvee_{i \in I} d_i(x_i,y_i). \]

Let $V$ and $V'$ be two metric spaces over $\mathcal D$. The space $V$ is a
{\it retract} of $V'$  if there are non-expansive maps
$f: V \to V'$ and $g : V' \to V$ such that $g \circ f = id_V$ (where $id_V$
is the identity map on $V$). If this is the case $f$ is said to be a
{\it coretraction} and $g$ a {\it retraction}. If $V$ is a subspace of $V'$,
then $V$ is a retract of $V'$ if there is a non-expansive map from
$V'$ to $V$ such that $g(x) = x$ for all $x\in V$. We can easily see that
every coretraction is an isometry. A metric space is an
{\it absolute retract} if it is a retract of every isometric extension.

A metric space $V$ is said to be {\it injective} if for all spaces $V'$ and $V''$,
each non-expansive map $f : V' \to V$, and every isometry
$g : V' \to V''$ there is a non-expansive map $ h : V'' \to V$ such that
$ h\circ g =f$. The metric space $V$ has the \emph{extension property} if for every space $V'$, every non-expansive map  $f$ from a subset $A$ of $V'$ extends to a non-expansive map from $V'$ to $V$. With the help of Zorn's lemma, this amounts to the fact that every non-expansive map from a subset $A$ of $V'$ extends to every $x\in V'\setminus A$ to a non-expansive map from $A\cup\{x\}$ to $V$.

We say that a space $V$ is {\it hyperconvex} if the
intersection of every family
of balls $\left(B_V(x_i,r_i)\right)_{i\in I}$ is non-empty whenever
$d(x_i,x_j) \leq r_i \oplus \overline {r_j}$ for all $i, j \in I$.

\noindent Hyperconvexity is equivalent to the conjunction of the following conditions:\\
$(1)$ {\it Convexity} : for all $x,y \in V$ and $p,q \in \mathcal D$ such
that $d(x,y) \leq p \oplus q$ there is $z \in V$ such that $d(x,z)\leq p$
and $d(z,y)\leq q$.\\
$(2)$ {\it 2-Helly property}: the intersection of every set
(or, equivalently, every family) of balls is non-empty provided
that their pairwise intersections are all non-empty.

We  recall the definition of the distance over a $DI^2$-quantale $\mathcal D$.
It relies on the classical notion of {\it residuation}.
Let $v\in \mathcal D$. Given $\gamma \in {\mathcal D}$, the sets
 $\{r \in {\mathcal D}: v \leq r \oplus \gamma\}$ and
$\{r \in {\mathcal D}: v \leq  \gamma \oplus r \}$ have least elements, that we denote
respectively $\lceil v \oplus \ominus \gamma \rceil$ and $\lceil \ominus\gamma \oplus v  \rceil$
(where, in fact, $\overline {\lceil\ominus\gamma \oplus v \rceil} =
\lceil \bar v \oplus \ominus \bar \gamma \rceil$).

For
$p, q \in \mathcal D$, set
$$D(p,q):=\{r \in {\mathcal D}: p\leq q \oplus \bar r\;\;{\rm and}\; \; q\leq p\oplus r\}.$$ Due to the
distributivity condition on $\mathcal D$ this set has a least
element, that we denote $d_{\mathcal D}(p,q)$.

\begin{lem} \cite{JaMiPo}
For every metric space $(V,d)$ over $\mathcal D$, and for all $x, y\in V$,
the following equality holds:
$$d(x,y) =  \bigvee_{z\in V} d_{\mathcal D}\left(d(z,x),d(z,y)\right).$$
\end{lem}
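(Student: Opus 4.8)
The plan is to establish the claimed equality by proving the two inequalities separately, noting that the supremum on the right is in fact attained at $z=x$. Throughout I will use only the three axioms defining a metric over $\mathcal D$ together with the fact, recorded just before the statement, that $D(p,q)$ has a least element $d_{\mathcal D}(p,q)$ for all $p,q\in\mathcal D$.

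First I would check that $d(x,y)$ is an upper bound for the family $\{d_{\mathcal D}(d(z,x),d(z,y)):z\in V\}$. Since $d_{\mathcal D}(p,q)$ is by definition the least element of $D(p,q)=\{r\in\mathcal D: p\leq q\oplus\bar r \text{ and } q\leq p\oplus r\}$, it suffices to verify, for each fixed $z\in V$, that $d(x,y)\in D(d(z,x),d(z,y))$, i.e.\ that both $d(z,x)\leq d(z,y)\oplus\overline{d(x,y)}$ and $d(z,y)\leq d(z,x)\oplus d(x,y)$ hold. The second inequality is precisely the triangle inequality applied to the triple $z,x,y$. For the first, combine the triangle inequality $d(z,x)\leq d(z,y)\oplus d(y,x)$ with the symmetry axiom $d(y,x)=\overline{d(x,y)}$. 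Hence $d_{\mathcal D}(d(z,x),d(z,y))\leq d(x,y)$ for every $z\in V$, and therefore $\bigvee_{z\in V}d_{\mathcal D}(d(z,x),d(z,y))\leq d(x,y)$.

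For the reverse inequality, specialize to $z=x$. By the axiom $d(x,x)=0$, the value $d_{\mathcal D}(d(x,x),d(x,y))=d_{\mathcal D}(0,d(x,y))$ is the least $r\in\mathcal D$ with $0\leq d(x,y)\oplus\bar r$ and $d(x,y)\leq 0\oplus r=r$. The first condition is vacuous because $0$ is the bottom element of $\mathcal D$; the second says exactly $r\geq d(x,y)$. Thus $d_{\mathcal D}(0,d(x,y))=d(x,y)$, which gives $d(x,y)\leq\bigvee_{z\in V}d_{\mathcal D}(d(z,x),d(z,y))$ and closes the argument. (Conceptually, the inequality $\geq$ says the Kuratowski-type assignment $x\mapsto(z\mapsto d(z,x))$ is non-expansive into the product, and the case $z=x$ says it is an isometry.)

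There is no real obstacle here: the only input beyond the metric axioms is that $0$ is the least element of $\mathcal D$ and that each $D(p,q)$ has a minimum, the latter being guaranteed by the distributivity condition on a $DI^2$-quantale. If one wished to be fully self-contained, the mildly technical point worth expanding is why $D(p,q)$ is nonempty and closed under the relevant meets so that $d_{\mathcal D}(p,q)$ exists; but this belongs to the ambient theory of $DI^2$-quantales and may simply be quoted.
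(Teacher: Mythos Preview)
Your proof is correct. The paper does not supply its own proof of this lemma but merely cites it from \cite{JaMiPo}, so there is nothing to compare against; your two-inequality argument using the triangle inequality for $\leq$ and the specialization $z=x$ for $\geq$ is the standard one and is complete as written.
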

From this lemma one infers the following fundamental observation.
\begin{proposition} (Proposition II.2-7 of \cite{JaMiPo})\label{metric-quantale}The map $(p,q) \mapsto d_{\mathcal D}(p,q)$ over a $DI^2$-quantale $\mathcal D$ is a distance
on $\mathcal D$, in fact $({\mathcal D},d_{\mathcal D})$ is an hyperconvex metric
space and every metric space over $\mathcal D$ embeds isometrically
into a power of  $({\mathcal D},d_{\mathcal D})$.
\end{proposition}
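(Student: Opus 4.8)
The plan is to verify the metric axioms for $d_{\mathcal D}$ by direct computation, then to establish hyperconvexity by exhibiting an explicit common point of a compatible family of balls, and finally to read off the isometric embedding from the displayed formula in the lemma just stated. Two elementary facts about the sets $D(p,q)$ will be used throughout: first, $r\in D(p,q)$ if and only if $\bar r\in D(q,p)$ (immediate from $\overline{\bar r}=r$); second, $D(p,q)$ is an up-set, since if $r\in D(p,q)$ and $r\leq r'$ then $q\oplus r\leq q\oplus r'$ and $p\oplus\bar r\leq p\oplus\bar{r'}$ by compatibility of $\oplus$ with the order, whence $r'\in D(p,q)$. In particular $d_{\mathcal D}(p,q)\leq r$ if and only if $r\in D(p,q)$.

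First I would check that $d_{\mathcal D}$ is a distance. One has $0\in D(p,p)$ because $\bar 0=0$ (the involution is order-preserving, hence carries the least element to the least element), so $d_{\mathcal D}(p,p)=0$; conversely $d_{\mathcal D}(p,q)=0$ gives $p\leq q$ and $q\leq p$, so $p=q$. For symmetry, the involution is an order-automorphism of $\mathcal D$ that maps $D(q,p)$ bijectively onto $D(p,q)$, hence its least element to the least element, giving $d_{\mathcal D}(p,q)=\overline{d_{\mathcal D}(q,p)}$. For the triangle inequality I set $r_1:=d_{\mathcal D}(p,s)$ and $r_2:=d_{\mathcal D}(s,q)$ and compute, from $r_1\in D(p,s)$ and $r_2\in D(s,q)$, that $p\leq s\oplus\bar r_1\leq q\oplus\bar r_2\oplus\bar r_1=q\oplus\overline{r_1\oplus r_2}$ and $q\leq s\oplus r_2\leq p\oplus r_1\oplus r_2$; thus $r_1\oplus r_2\in D(p,q)$ and $d_{\mathcal D}(p,q)\leq r_1\oplus r_2$.

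Next I would prove hyperconvexity. Let $\bigl(B(p_i,r_i)\bigr)_{i\in I}$ be balls with $d_{\mathcal D}(p_i,p_j)\leq r_i\oplus\bar r_j$ for all $i,j$, and put $p:=\bigwedge_{i\in I}(p_i\oplus r_i)$. Then $p\leq p_i\oplus r_i$ for every $i$. For the complementary inequality, the distributivity axiom yields $p\oplus\bar r_i=\bigwedge_{j}(p_j\oplus r_j\oplus\bar r_i)$, while the compatibility hypothesis says $r_i\oplus\bar r_j\in D(p_i,p_j)$, i.e. $p_i\leq p_j\oplus\overline{r_i\oplus\bar r_j}=p_j\oplus r_j\oplus\bar r_i$ for every $j$; taking the meet over $j$ gives $p_i\leq p\oplus\bar r_i$. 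Hence $r_i\in D(p_i,p)$, that is $d_{\mathcal D}(p_i,p)\leq r_i$, so $p$ belongs to every ball and $(\mathcal D,d_{\mathcal D})$ is hyperconvex (in particular convex and satisfying the $2$-Helly property, by the equivalence recalled after the definition of hyperconvexity).

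Finally, for the embedding statement, given a metric space $(V,d)$ over $\mathcal D$ I would consider $\varphi:V\to\mathcal D^{V}$, $\varphi(x):=\bigl(d(z,x)\bigr)_{z\in V}$, into the $\ell^\infty$-power $\prod_{z\in V}(\mathcal D,d_{\mathcal D})$, which is again a metric space over $\mathcal D$ since products exist in $\mathcal M_{\mathcal D}$. By definition of the product distance, $d_{\mathcal D^V}\bigl(\varphi(x),\varphi(y)\bigr)=\bigvee_{z\in V}d_{\mathcal D}\bigl(d(z,x),d(z,y)\bigr)$, and by the preceding lemma this join equals $d(x,y)$, so $\varphi$ is an isometry. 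I do not expect a genuine obstacle here: the content of the statement is largely carried by the lemma, and the only computation requiring some care is the verification that $\bigwedge_i(p_i\oplus r_i)$ meets every ball, where the distributivity axiom of a $DI^2$-quantale and the involution identity $\overline{r_i\oplus\bar r_j}=r_j\oplus\bar r_i$ are both essential.
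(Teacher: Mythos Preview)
Your proof is correct. The paper does not supply its own proof of this proposition, citing it instead from \cite{JaMiPo}; your argument is exactly the standard one: verify the axioms directly, show hyperconvexity by checking that $p:=\bigwedge_i(p_i\oplus r_i)$ lies in every ball via the distributivity law and the involution identity $\overline{r_i\oplus\bar r_j}=r_j\oplus\bar r_i$, and derive the isometric embedding from the preceding lemma. There is nothing to correct.
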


Hyperconvex spaces enjoy the extension property, hence they are injective. Since  hyperconvexity is preserved under the formation of products, every  metric space embeds isometrically into an injective object. This fact is shortly expressed by saying  that the category $\mathcal M_{\mathcal D}$ has \emph{enough injectives}. From that follows
 an  important structural property of the category of metric spaces over a $DI^2$-quantale.

\begin{proposition} (Theorem 1, section II-2.9 of \cite{JaMiPo}) \label{equivalenceAR} In the category of metric spaces over  a $DI^2$-quantale $\mathcal D$, injective, absolute retracts,  hyperconvex spaces, spaces with the extension property and retracts of power of $({\mathcal D},d_{\mathcal D})$ coincide.\end{proposition}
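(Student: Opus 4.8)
The plan is to show that the five properties coincide by going once around the cycle of implications
\emph{injective} $\Rightarrow$ \emph{absolute retract} $\Rightarrow$ \emph{retract of a power of} $(\mathcal D,d_{\mathcal D})$ $\Rightarrow$ \emph{hyperconvex} $\Rightarrow$ \emph{extension property} $\Rightarrow$ \emph{injective}. Three of these five links are almost immediate from the definitions. If $V$ is injective and $V''$ is an isometric extension of $V$, apply the defining property of injectivity to the identity map $\mathrm{id}_V\colon V\to V$ and the isometric inclusion $V\hookrightarrow V''$; the resulting non-expansive $h\colon V''\to V$ restricts to the identity on $V$, i.e.\ is a retraction, so $V$ is an absolute retract. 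If $V$ is an absolute retract, Proposition~\ref{metric-quantale} provides an isometric embedding of $V$ into some power of $(\mathcal D,d_{\mathcal D})$; identifying $V$ with its image turns that power into an isometric extension of $V$, whence $V$ is a retract of it. Finally, if $V$ has the extension property, then given non-expansive $f\colon V'\to V$ and an isometry $g\colon V'\to V''$, note first that $g$ is injective (if $g(x)=g(y)$ then $d(x,y)=d(g(x),g(y))=0$, so $x=y$), hence $g$ identifies $V'$ with an isometric subspace of $V''$; extending $f$ along this subspace produces a non-expansive $h\colon V''\to V$ with $h\circ g=f$, so $V$ is injective.

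The implication \emph{retract of a power of} $(\mathcal D,d_{\mathcal D})$ $\Rightarrow$ \emph{hyperconvex} rests on the lemma that a retract of a hyperconvex space is hyperconvex. Suppose $g\colon W\to V$ is a non-expansive retraction with coretraction $f\colon V\to W$, which is an isometry since coretractions always are. Given balls $\left(B_V(x_i,r_i)\right)_{i\in I}$ in $V$ with $d(x_i,x_j)\le r_i\oplus\overline{r_j}$ for all $i,j$, the balls $\left(B_W(f(x_i),r_i)\right)_{i\in I}$ satisfy the same compatibility condition because $d_W(f(x_i),f(x_j))=d_V(x_i,x_j)$. Hyperconvexity of $W$ yields a point $w\in\bigcap_{i}B_W(f(x_i),r_i)$, and then $g(w)\in\bigcap_{i}B_V(x_i,r_i)$, since $d_V(x_i,g(w))=d_V(g(f(x_i)),g(w))\le d_W(f(x_i),w)\le r_i$ by non-expansiveness of $g$ together with $g\circ f=\mathrm{id}_V$. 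Combining this with Proposition~\ref{metric-quantale}, which states that $(\mathcal D,d_{\mathcal D})$ is hyperconvex, and with the preservation of hyperconvexity under products recalled above, every retract of a power of $(\mathcal D,d_{\mathcal D})$ is hyperconvex.

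The implication \emph{hyperconvex} $\Rightarrow$ \emph{extension property} is the classical one-point-extension argument already alluded to in the text: by Zorn's lemma it suffices to extend a non-expansive $f\colon A\to V$, with $A\subseteq V'$, to a single point $x\in V'\setminus A$. Take the family of balls $\left(B_V(f(a),d_{V'}(a,x))\right)_{a\in A}$ in $V$. For $a,b\in A$,
\[
d_V(f(a),f(b))\le d_{V'}(a,b)\le d_{V'}(a,x)\oplus d_{V'}(x,b)=d_{V'}(a,x)\oplus\overline{d_{V'}(b,x)},
\]
so the compatibility hypothesis of hyperconvexity holds; any $y$ in the non-empty intersection of these balls satisfies $d_V(f(a),y)\le d_{V'}(a,x)$ for all $a\in A$ (and, by the involution, $d_V(y,f(a))\le d_{V'}(x,a)$), so setting $f(x):=y$ yields a non-expansive extension of $f$ to $A\cup\{x\}$. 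This already closes the cycle, since \emph{extension property} $\Rightarrow$ \emph{injective} was treated above.

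The only real bookkeeping is in the retract-of-hyperconvex lemma, where one must keep straight which maps are genuine isometries (the coretraction $f$, used to transport the compatibility condition forward to $W$) and which are merely non-expansive (the retraction $g$, used together with $g\circ f=\mathrm{id}_V$ to transport the conclusion back to $V$). Everything else reduces to routine use of the triangle inequality, the symmetry axiom $d(x,y)=\overline{d(y,x)}$, the identity $\overline{p\oplus q}=\overline q\oplus\overline p$, and the order-compatibility of $\oplus$ and of the involution, all of which are available in any $DI^2$-quantale.
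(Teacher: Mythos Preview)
The paper does not supply its own proof of this proposition: it is quoted as Theorem~1, section~II-2.9 of \cite{JaMiPo}, and the surrounding text only sketches the fragment \emph{hyperconvex} $\Rightarrow$ \emph{extension property} $\Rightarrow$ \emph{injective} and notes that hyperconvexity is preserved under products. Your cycle of implications is correct and is precisely the standard argument underlying the cited result; in particular, your handling of the retract-of-hyperconvex lemma (using that the coretraction is an isometry to transport the compatibility condition forward and that the retraction is merely non-expansive to pull the witness back) and of the one-point extension step is accurate.
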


In the category of  metric spaces (over the non-negative reals), every metric space has an \emph{injective envelope} (also called an injective hull), a  major fact due to Isbell \cite{isbell}. One can view an injective envelope of a metric space $V$ as an hyperconvex isometric extension $\tilde V$ of $V$, which is minimal with respect to inclusion (that is, there is no proper hyperconvex subspace of $\tilde V$ containing isometrically $V$) and one can note that those minimal  extensions are isometric by the identity on  $V$. These facts extend to generalized metric spaces.
\begin{proposition}\label{prop:injectiveenvelope} (Theorem 2, section II-3.1 of \cite{JaMiPo})
Every metric space over a $DI^2$-quantale has an injective envelope.
\end{proposition}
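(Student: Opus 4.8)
The plan is to realize an injective envelope of $V$ as an inclusion-minimal hyperconvex isometric extension — exactly the description recalled just before the statement — so that, granting the remarks preceding the statement (minimal hyperconvex extensions are unique up to the identity on $V$, and are precisely the injective essential extensions) together with Proposition \ref{equivalenceAR}, it suffices to produce \emph{one} minimal hyperconvex isometric extension. First I would embed $V$ into an injective space: by Proposition \ref{metric-quantale}, $V$ embeds isometrically into a power $P$ of $(\mathcal D, d_{\mathcal D})$, and $P$ is hyperconvex since hyperconvexity is preserved under products (as noted in the excerpt), hence injective by Proposition \ref{equivalenceAR}. Two consequences I would record and use repeatedly: every hyperconvex subspace $W$ with $V \subseteq W \subseteq P$ is a retract of $P$ (extend $\mathrm{id}_W$ along the isometric inclusion $W \hookrightarrow P$, using injectivity of $W$), and conversely every retract of $P$ is injective, hence hyperconvex, again by Proposition \ref{equivalenceAR}; note also that, for a subspace of $P$, being hyperconvex and containing $V$ isometrically depends only on the induced metric, not on whether one views it inside $P$ or inside a larger hyperconvex subspace.

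Next I would run Zorn's lemma on the set $\mathcal H$ of hyperconvex subspaces of $P$ containing $V$ isometrically, ordered by reverse inclusion; it is non-empty since $P \in \mathcal H$. The point to verify is that every chain $(W_\alpha)_\alpha$ in $\mathcal H$ has an upper bound, i.e. that $W := \bigcap_\alpha W_\alpha$ (which still contains $V$) is hyperconvex; by the previous paragraph this amounts to producing a retraction $P \to W$. One obtains it as a transfinite composite of retractions $\rho_\alpha : P \to W_\alpha$ chosen coherently ($\rho_\beta \circ \rho_\alpha = \rho_\beta$ for $\alpha \le \beta$, built by transfinite recursion from idempotent retractions $W_\alpha \to W_{\alpha+1}$), the value of the composite at a point being, at a limit stage, the limit of the earlier iterates. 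A maximal element $\tilde V$ of $\mathcal H$ for this order is then a hyperconvex isometric extension of $V$ with no proper hyperconvex subspace still containing $V$ isometrically, and it is injective, being a retract of the injective space $P$. By the remarks preceding the statement, $\tilde V$ is an injective envelope of $V$, and any two such are isomorphic by the identity on $V$; this also shows $\tilde V$ does not depend (up to such isomorphism) on the chosen embedding into $P$.

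The main obstacle is the limit-stage argument in the Zorn step: showing that the intersection of a chain of hyperconvex subspaces of a hyperconvex space is again hyperconvex (equivalently, a retract). This requires a Cauchy-completeness property of hyperconvex metric spaces over $\mathcal D$ together with the completeness of the lattice $\mathcal D$ itself, which together guarantee that the decreasing nets of distance values controlling the iterates $\rho_\alpha(x)$ possess infima realized by genuine points of $P$; in the classical case of metric spaces over $\mathbb R_{\ge 0}$ this is exactly the place where one invokes that hyperconvex spaces are complete. An alternative route that avoids transfinite compositions altogether is to adapt Isbell's explicit tight-span construction: take for $\tilde V$ the set of pointwise-minimal maps $f : V \to \mathcal D$ for which $x \mapsto f(x)$ defines a one-point isometric extension of $V$ — minimality being captured intrinsically by a fixpoint equation in terms of the residual least elements $\lceil\, \cdot \oplus \ominus \cdot \,\rceil$ of $\mathcal D$, so that no descent argument is needed — equipped with the sup-metric and with $V$ embedded by $x \mapsto d(x,-)$; one then checks directly that $\tilde V$ is a hyperconvex isometric extension essential over $V$, the distributivity axiom of the $DI^2$-quantale doing the work in the hyperconvexity verification. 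Either way, combining with Proposition \ref{equivalenceAR} yields the asserted injective envelope.
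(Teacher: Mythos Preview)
The paper does not supply a proof of this proposition; it is quoted as Theorem~2, Section~II-3.1 of \cite{JaMiPo}, so there is nothing in the present text to compare against line by line. That said, the construction in \cite{JaMiPo} is the second route you mention --- the Isbell-type tight span realized as the set of minimal ``metric forms'' $f:V\to\mathcal D$ --- rather than a Zorn argument on hyperconvex subspaces.

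Your primary approach has a genuine gap at exactly the place you flag. For a descending chain $(W_\alpha)$ of hyperconvex subspaces of $P$, you propose to build a retraction onto $W=\bigcap_\alpha W_\alpha$ as a ``transfinite composite'' of retractions $\rho_\alpha$, with the value at a limit ordinal being ``the limit of the earlier iterates''. But over an arbitrary $DI^2$-quantale there is no ambient notion of convergence that would make this meaningful: you have not defined Cauchy nets, have not shown that hyperconvex spaces over $\mathcal D$ are complete in any relevant sense, and have not argued that the putative limit point lands in $W$ or that the resulting map is non-expansive and idempotent onto $W$. Even in the classical case over $\mathbb R_{\ge 0}$, the fact that a decreasing chain of hyperconvex subsets of a bounded hyperconvex space has hyperconvex intersection is a nontrivial theorem (Baillon), and its proof uses order-theoretic features of $\mathbb R_{\ge 0}$ that do not transfer verbatim to a general complete lattice with a non-commutative $\oplus$. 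So the Zorn step, as written, does not go through.

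Your alternative --- the explicit tight-span construction --- is the right one, and is what \cite{JaMiPo} does. If you want a self-contained argument, drop the Zorn approach and carry out the tight span directly: define $\tilde V$ as the set of maps $f:V\to\mathcal D$ satisfying $f(x)=\bigvee_{y\in V}\lceil \ominus f(y)\oplus d(y,x)\rceil$ (equivalently, pointwise-minimal among the one-point extension profiles), equip it with the sup-distance, embed $V$ via $x\mapsto d(x,-)$, and verify hyperconvexity and essentiality from the distributivity axiom and the residuation identities. That is a complete proof; the Zorn route, as you present it, is not.
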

In the category of ordered sets, the injective envelope of an ordered set is its MacNeille completion \cite{banaschewski-bruns}. As a consequence of Proposition \ref {prop:injectiveenvelope}, injective envelopes also exist in the category of undirected graphs as well as in the category  of directed graphs (see  \cite{JaMiPo} for details). We discuss briefly in Subsection \ref{injectivehull} the case of oriented graphs.
\section{Directed graphs as generalized metric spaces}

We  consider the $DI^{2}$-quantales  $\mathbf {F}(\Lambda^{*})$ and $\mathbf{N}(\Lambda^{*})$ and then the graphs for which the zigzag distance yield metric spaces over those quantales.

As said, a  (reflexive) graph is a \emph{zigzag} if its symmetric   hull is a path.
If $u:=a_0 \cdots a_i\cdots a_{n-1}$ is a word over the alphabet $\Lambda$,  the \emph{zigzag $Z_u$} with \emph{initial  vertex}  $\{0\}$ and \emph{final vertex}  $\{n\}$ is the reflexive oriented graph on $\{0, \dots n\}$ such that $(i,i+1)$ is an arc if  $a_i= +$ and, $(i+1,i)$ is an arc if  $a_i= -$.


If $G:=(V, \mathcal E)$ is a graph and $x, y$ are two vertices of $G$ then the zigzag distance $d_G(x,y)$ consists of words $u:= a_0  \cdots a_i \cdots  a_{n-1}$ over the alphabet $\Lambda^{\ast}$ for which  there is an homomorphism $f$ from the  zigzag $Z_u$    to $G$ sending the initial vertex $0$ to $x$ and the final vertex $n$ to $y$. If $x$ and $y$ are not connected in the symmetric hull of $G$, no such word exist,  hence $d_G(x,y)= \emptyset$.  It is a simple exercise to verify that the pair $(V, d_G)$ is a metric space over $\mathcal D:= \mathbf {F}(\Lambda^{*})$. Metric spaces over $\mathcal D:= \mathbf {F}(\Lambda^{*})$ such that the distance is the zigzag distance associated with a  reflexive directed graph are characterized by the following lemma.

\begin{lem}\label{metric-graph}
Let $(V,d)$ be a metric space over $\mathcal D:= \mathbf {F}(\Lambda^{*})$. The following properties are
equivalent:\\
$(i)$ The map $d$ is of the form $d_{G}$ for some  directed graph  $G:=(V,\mathcal E)$;\\
$(ii)$ For all $u$, $v \in \Lambda^{\ast}$ and  $x,y \in V$,
if $uv \in d(x,y)$, then there is
some $z\in V$ such that $u \in d(x,z)$ and $v \in d(z,y)$.
\end{lem}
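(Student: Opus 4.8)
The plan is to prove the two implications separately, with the direction $(i)\Rightarrow(ii)$ being essentially a concatenation/splitting argument on zigzags and $(ii)\Rightarrow(i)$ being a reconstruction of the graph from the metric.

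\textbf{The direction $(i)\Rightarrow(ii)$.} Suppose $d=d_G$ for some reflexive directed graph $G=(V,\mathcal E)$, and suppose $uv\in d_G(x,y)$ with $u=a_0\cdots a_{k-1}$ and $v=b_0\cdots b_{m-1}$. By definition of the zigzag distance there is a homomorphism $f\colon Z_{uv}\to G$ with $f(0)=x$ and $f(k+m)=y$. Now $Z_{uv}$ contains $Z_u$ (on the vertices $\{0,\dots,k\}$) and a copy of $Z_v$ (on the vertices $\{k,\dots,k+m\}$) glued at the single vertex $k$. Set $z:=f(k)$. Then the restriction of $f$ to $\{0,\dots,k\}$ is a homomorphism $Z_u\to G$ witnessing $u\in d_G(x,z)$, and the restriction to $\{k,\dots,k+m\}$ (after the obvious relabelling $i\mapsto i-k$, which is an isomorphism onto $Z_v$) is a homomorphism $Z_v\to G$ witnessing $v\in d_G(z,y)$. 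This is the easy direction; I expect it to be essentially immediate once the bookkeeping with vertex labels is written out.

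\textbf{The direction $(ii)\Rightarrow(i)$.} Here I would \emph{define} the candidate graph $G=(V,\mathcal E)$ by declaring, for distinct $x,y\in V$, that $(x,y)\in\mathcal E$ iff the one-letter word $+$ lies in $d(x,y)$ (and putting a loop at every vertex, as required by the standing convention). One must first check this is a legitimate reflexive oriented graph, i.e.\ antisymmetric: if both $+\in d(x,y)$ and $+\in d(y,x)$ for $x\neq y$, then since $d(y,x)=\overline{d(x,y)}$ we get $+\in d(x,y)$ and $-\in d(x,y)$; I would need to derive a contradiction with $d(x,y)\neq 0$ (the full set of words) --- actually this requires care, since $\mathbf F(\Lambda^*)$ is just final segments and nothing yet forbids $d(x,y)$ containing both $+$ and $-$. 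The resolution is that the lemma as stated only claims $d$ has the form $d_G$ for a \emph{directed} graph, not an oriented one, so I do not need antisymmetry: I simply allow both arcs $(x,y)$ and $(y,x)$ when both $+$ and $-$ appear, defining $(x,y)\in\mathcal E$ iff $+\in d(x,y)$, with loops everywhere. The real content is then to prove $d=d_G$ for this $G$.

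\textbf{The core computation, $d=d_G$.} The inclusion $d_G(x,y)\subseteq d(x,y)$ is by induction on the length $n$ of a word $u=a_0\cdots a_{n-1}\in d_G(x,y)$: a witnessing homomorphism $f\colon Z_u\to G$ gives $z:=f(1)$ with (say $a_0=+$) $(x,z)\in\mathcal E$, so $a_0=+\in d(x,z)$ by definition of $G$, and $a_1\cdots a_{n-1}\in d_G(z,y)\subseteq d(z,y)$ by induction; the triangle inequality and the fact that $d$ is closed under superwords (being a final segment) give $u\in d(x,y)$. For the reverse inclusion $d(x,y)\subseteq d_G(x,y)$: take $u=a_0\cdots a_{n-1}\in d(x,y)$ and induct on $n$ using hypothesis $(ii)$ --- write $u=a_0\cdot(a_1\cdots a_{n-1})$, apply $(ii)$ to get $z$ with $a_0\in d(x,z)$ and $a_1\cdots a_{n-1}\in d(z,y)$; then $a_0\in d(x,z)$ means (if $a_0=+$) that $+\in d(x,z)$, hence $(x,z)\in\mathcal E$ or $x=z$ (the loop case), and by induction $a_1\cdots a_{n-1}\in d_G(z,y)$, so concatenating the arc/loop with the witnessing homomorphism $Z_{a_1\cdots a_{n-1}}\to G$ yields a homomorphism $Z_u\to G$ with endpoints $x,y$. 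The base case $n=0$ is where I must invoke that $d(x,y)$ contains the empty word iff $x=y$ (equivalently, $d(x,y)=0$ iff $x=y$, since $0=\Lambda^*$ is the only final segment containing $\Box$), which matches $d_G(x,x)=\Lambda^*$. The step $(ii)$ itself is exactly what lets the induction split off the first letter; the only mild obstacle is handling the loop case $x=z$ (so that an empty arc contributes nothing) and keeping the zigzag concatenation $Z_{a_0}$-then-$Z_{a_1\cdots a_{n-1}}$ correctly glued, but this is routine once set up.
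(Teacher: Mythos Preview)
Your proof is correct and follows essentially the same approach as the paper's: define $\mathcal E=\{(x,y):+\in d(x,y)\}$ for $(ii)\Rightarrow(i)$ and verify $d=d_G$ by the triangle inequality in one direction and repeated splitting via $(ii)$ in the other; and for $(i)\Rightarrow(ii)$ take $z=f(k)$ for the witnessing homomorphism $f:Z_{uv}\to G$. The only cosmetic differences are that the paper applies $(ii)$ all at once to obtain the full sequence $z_0,\dots,z_n$ rather than framing it as an induction peeling off one letter, and your detour about antisymmetry is unnecessary (as you yourself note, the lemma concerns directed, not oriented, graphs).
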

\begin{proof}

$(ii) \Rightarrow (i)$. Let $G:=(V,\mathcal E)$ where
$\mathcal E:=\{ (x,y) : + \in d(x,y) \}$. Let $u:= a_0 \cdots a_{n-1} \in d_{G}(x,y)$. We prove that $u\in d(x,y)$. By definition of $d_G$, there is a homomorphism $f$ of the zigzag   $Z_u$ in $G$ sending its extremities $0$ and $n$ to $x$ and $y$ respectively. This means that  $(f(i),  f({i+1})) \in \mathcal E$ if $a_i= +$,  and  $(f({i+1}), f({i})) \in \mathcal E$ if  $a_i=-$ for each $i$.
By definition of $G$, $a_i \in d(f(i),  f({i+1})) $. From the triangular
inequality,  we get
$d(x,y) \supseteq d(f(0),f(1))\cdots d(f(n-1), f(n))$ (in multiplicative notation), hence
$u = a_0 \cdots a_{n-1} \in d(x,y)$.
Conversely, let $u: =a_0 \cdots  a_n \in d(x,y)$. From $(ii)$, there is a
sequence of elements $z_0: =x,\dots, z_n := y$  such that
$a_i \in d(z_i, z_{i+1})$ for all $i$,
$0 \le i < n$. Thus  $z_0,  \dots,  z_{n}$ is the image of the zigzag $Z_u$. It follows
$u \in d_{G}(x,y)$.

$(i) \Rightarrow (ii)$.  Let $u := a_0\cdots  a_{n-1}$,
$v:= b_0 \cdots b_{m-1}$, such that  $uv \in d(x,y)$. Since
$uv \in d(x,y)$ and $d= d_G$, there is a homomorphism $f$ from $Z_{uv}$  to $G$ sending $0$ to $x$ and $n+m$ to $y$. It suffices to take $z=f(n)$.
\end{proof}

Property $(ii)$ of Lemma \ref{metric-graph} is a weak form of convexity, hence:

\begin{cor} The distance of a metric spaces over $\mathcal D:=\mathbf{F}(\Lambda^{*})$ that satisfies  the convexity property, e.g. an  hyperconvex  space,
 is the zigzag distance associated with a reflexive directed graph.
\end{cor}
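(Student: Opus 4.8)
The plan is to reduce the corollary to an application of Lemma \ref{metric-graph}, part $(i)\Leftrightarrow(ii)$: it suffices to check that a metric space $(V,d)$ over $\mathcal D:=\mathbf{F}(\Lambda^{*})$ satisfying the convexity property $(1)$ automatically satisfies condition $(ii)$ of that lemma, and then to recall that hyperconvex spaces are in particular convex. So the real content is the implication \emph{convexity} $\Rightarrow$ $(ii)$, after which the conclusion that $d=d_G$ for a reflexive directed graph $G$ is immediate from Lemma \ref{metric-graph}.

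First I would unwind the two notions in the quantale $\mathbf{F}(\Lambda^{*})$, where the order is \emph{reverse} inclusion, $\oplus$ is concatenation of languages, and $0=\Lambda^{*}$, $1=\emptyset$. Suppose $uv\in d(x,y)$ for words $u,v\in\Lambda^{*}$. Consider the two final segments (principal filters in the subword order) $p:=\uparrow u$ and $q:=\uparrow v$; their concatenation $pq$ is a final segment containing $uv$, hence $d(x,y)\supseteq\{uv\}^{\Delta}\supseteq pq$ is false in general — instead I should argue directly that $d(x,y)\le p\oplus q$ in the quantale order, i.e. $d(x,y)\supseteq pq$, need not hold, so the right move is to use the smallest final segment $\langle uv\rangle:=\{w:uv\le w\}$ and note $d(x,y)\supseteq$ some set only when $uv$ lies in it. The clean formulation: since $uv\in d(x,y)$ and $d(x,y)$ is a final segment, and since $\langle u\rangle\cdot\langle v\rangle\subseteq\langle uv\rangle$ with $uv\in\langle u\rangle\cdot\langle v\rangle$, we get that $d(x,y)$ is above (in reverse-inclusion order) the element $\langle u\rangle\oplus\langle v\rangle$ precisely when $\langle u\rangle\cdot\langle v\rangle\subseteq d(x,y)$; but $\langle u\rangle\cdot\langle v\rangle$ is exactly the set of superwords of words of the form $u'v'$ with $u\le u'$, $v\le v'$, all of which are superwords of $uv$, hence lie in $d(x,y)$. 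Therefore $d(x,y)\le\langle u\rangle\oplus\langle v\rangle$ in $\mathcal D$. Apply convexity with $p:=\langle u\rangle$, $q:=\langle v\rangle$: there is $z\in V$ with $d(x,z)\le\langle u\rangle$ and $d(z,y)\le\langle v\rangle$, i.e. $u\in d(x,z)$ and $v\in d(z,y)$, which is exactly $(ii)$.

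Then I would invoke Lemma \ref{metric-graph} to conclude $d=d_G$ for the reflexive directed graph $G:=(V,\mathcal E)$ with $\mathcal E:=\{(a,b):+\in d(a,b)\}$, noting that reflexivity is automatic since $\Box\in d(a,a)$ forces, after insertion, $+\in d(a,a)$ (a loop at each vertex). For the hyperconvex case I would simply cite the characterization stated in the excerpt that hyperconvexity is equivalent to the conjunction of convexity and the $2$-Helly property, so in particular hyperconvex $\Rightarrow$ convex, and the corollary applies.

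The main obstacle I anticipate is purely the bookkeeping in $\mathbf{F}(\Lambda^{*})$ with its reversed order: one must be careful that ``$d(x,y)\le p\oplus q$'' means ``$d(x,y)\supseteq pq$'' and that the least element in a set of the form $\{r:\text{some inclusion}\}$ corresponds to the \emph{largest} language; choosing $p=\langle u\rangle$, $q=\langle v\rangle$ (the \emph{smallest} final segments containing $u$ and $v$) is what makes the conclusions $u\in d(x,z)$, $v\in d(z,y)$ come out right rather than something vacuous. Beyond that sign-chasing, everything is a direct quotation of Lemma \ref{metric-graph} and the hyperconvexity characterization, so the proof is short.
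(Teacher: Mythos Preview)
Your proposal is correct and follows exactly the route the paper intends: the paper does not give a separate proof but simply remarks that property $(ii)$ of Lemma~\ref{metric-graph} ``is a weak form of convexity, hence'' the corollary. You have merely spelled out this implication by choosing $p=\uparrow u$, $q=\uparrow v$, observing $pq\subseteq d(x,y)$ (equivalently $d(x,y)\le p\oplus q$ in the reversed order), and then applying convexity; this is precisely the intended one-line argument. One cosmetic point: reflexivity of $G$ follows directly from $d(a,a)=0=\Lambda^{*}\ni +$, so there is no need to invoke ``insertion''.
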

For $\mathcal D$ equal  to $\mathbf{F}(\Lambda^{*})$ as well as
$\mathbf{N}(\Lambda^{*})$, let $\mathcal G_{\mathcal D}$ be the class  of graphs whose the zigzag distance belongs to $\mathcal D$. With the homomorphisms of graphs, it becomes a  category.

According to Lemma \ref{metric-graph}, \emph{if $G$ and $G'$ are two reflexive directed graphs, a map $f$ from $V(G)$ to $V(G')$ is a graph homomorphism if and only if $f$ is non-expansive from the metric space associated to $G$ to the metric space associated to $G'$}. In other words:
\begin{proposition} As a category, $\mathcal G_{\mathcal D}$ identifies to a full subcategory of $\mathcal M_{\mathcal D}$, the  category   of metric spaces  over $\mathcal D$, with the non-expansive maps as morphisms.
\end{proposition}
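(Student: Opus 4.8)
The plan is to verify the two directions of the identification. First I would observe that $\mathcal G_{\mathcal D}$ is by construction a class of objects of $\mathcal M_{\mathcal D}$: each object $G\in\mathcal G_{\mathcal D}$ gives a metric space $(V(G),d_G)$, and by hypothesis $d_G$ takes values in $\mathcal D$, so $(V(G),d_G)$ is a genuine object of $\mathcal M_{\mathcal D}$. (One should recall from the remark after Lemma~\ref{metric-graph} that $(V(G),d_G)$ is indeed a metric space over $\mathbf F(\Lambda^{*})$, and when $d_G$ lands in the subquantale $\mathbf N(\Lambda^{*})$ the triangle inequality and involution axioms still hold there, since $\mathbf N(\Lambda^{*})$ is a $DI^2$-subquantale.)

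Next I would pin down the morphisms. The content of the claim is the sentence displayed just before the statement, itself a consequence of Lemma~\ref{metric-graph}: for reflexive directed graphs $G,G'$, a map $f\colon V(G)\to V(G')$ is a graph homomorphism if and only if it is non-expansive as a map of metric spaces. I would spell this out in both directions. If $f$ is a homomorphism and $u\in d_G(x,y)$, then composing the homomorphism $Z_u\to G$ witnessing $u\in d_G(x,y)$ with $f$ yields a homomorphism $Z_u\to G'$ sending $0\mapsto f(x)$, $n\mapsto f(y)$, so $u\in d_{G'}(f(x),f(y))$; since final segments are ordered by reverse inclusion, $d_{G'}(f(x),f(y))\subseteq d_G(x,y)$ as sets means $d_{G'}(f(x),f(y))\geq d_G(x,y)$ in $\mathcal D$ — wait, one must be careful with the direction of the order. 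Concretely: $d_G(x,y)\subseteq d_{G'}(f(x),f(y))$ as subsets of $\Lambda^{*}$, and since $\mathcal D$ carries the reverse-inclusion order, this reads $d_{G'}(f(x),f(y))\leq d_G(x,y)$, i.e. $f$ is non-expansive. Conversely, if $f$ is non-expansive then for every arc $(x,y)$ of $G$ we have $+\in d_G(x,y)$, hence $+\in d_{G'}(f(x),f(y))$, which exactly says $(f(x),f(y))$ is an arc (possibly a loop if $f(x)=f(y)$) of $G'$; thus $f$ is a homomorphism.

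Finally I would assemble these into the categorical statement: the assignment $G\mapsto (V(G),d_G)$ is injective on objects and, by the morphism equivalence, establishes a bijection $\mathrm{Hom}_{\mathcal G_{\mathcal D}}(G,G')\cong \mathrm{Hom}_{\mathcal M_{\mathcal D}}((V(G),d_G),(V(G'),d_{G'}))$ that is visibly compatible with composition and identities (since composition of maps is composition of maps in either category); hence $\mathcal G_{\mathcal D}$ embeds as a subcategory, and it is \emph{full} precisely because that Hom-bijection is surjective.

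As for the main obstacle: there is no deep difficulty here — the essential work has already been done in Lemma~\ref{metric-graph}. The only point requiring genuine care is the bookkeeping of the order convention (reverse inclusion on $\mathbf F(\Lambda^{*})$): the implication "homomorphism $\Rightarrow$ fewer constraints violated" corresponds to "larger distance set" which, under reverse inclusion, is "$\leq$", i.e. non-expansiveness. Getting this inequality pointed the right way, and noting that reflexivity of $G'$ is what lets the image of an arc be a loop (so that non-expansiveness does not force $f$ to be injective on arcs), are the two places where one could slip. Everything else is the routine verification that a bijection on objects and morphisms respecting composition is a full embedding of categories.
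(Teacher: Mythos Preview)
Your proposal is correct and follows exactly the paper's approach: the paper does not give a separate proof but simply records the italicized sentence preceding the proposition, namely that by Lemma~\ref{metric-graph} graph homomorphisms coincide with non-expansive maps, which is precisely what you unfold. Your explicit care with the reverse-inclusion order and the recovery of arcs via $+\in d_G(x,y)$ just makes the paper's one-line justification explicit.
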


We recall that the \emph{direct product of a family $(G_i)_{i\in I}$ of reflexive directed graphs} is the graph $\Pi_{i\in I} G_i$ whose vertex set is the direct product $\Pi_{i\in I} V(G_i)$, two vertices $(x_i)_{i\in I}$ and $(y_i)_{i \in I}$ forming a directed edge if and only if $(x_i, y_i)$ form a directed edge for each $i$. The reader will note that the existence of loops at  every vertex has a strong effect of the existence of edges on distinct vertices.

\begin{cor} \label{lem:product}
If $G$ is  the direct product of a family $(G_i)_{i\in I}$ of reflexive directed graphs then the zigzag distance $d_G$ is the sup-distance of the $d_{G_{i}}$'s.
\end{cor}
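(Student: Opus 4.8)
The plan is to prove the two inclusions $d_G((x_i),(y_i)) \subseteq \bigvee_{i} d_{G_i}(x_i,y_i)$ and $d_G((x_i),(y_i)) \supseteq \bigvee_{i} d_{G_i}(x_i,y_i)$ separately, working directly from the definition of the zigzag distance in terms of homomorphisms from zigzags, together with the definition of the direct product of reflexive directed graphs. Recall first that the join in $\mathbf{F}(\Lambda^*)$ (ordered by reverse inclusion) is set-theoretic intersection, so the claimed identity unwinds to $d_G((x_i)_{i\in I},(y_i)_{i\in I}) = \bigcap_{i\in I} d_{G_i}(x_i,y_i)$. Thus a word $u$ is a zigzag distance in the product exactly when it is simultaneously a zigzag distance between the corresponding coordinates in every factor.

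For the inclusion $d_G((x_i),(y_i)) \subseteq \bigcap_i d_{G_i}(x_i,y_i)$: if $u := a_0\cdots a_{n-1} \in d_G((x_i),(y_i))$, there is a homomorphism $f$ from $Z_u$ to $G$ sending $0$ to $(x_i)$ and $n$ to $(y_i)$. For each fixed coordinate $i$, the composition $\pi_i \circ f$ of $f$ with the $i$-th projection $\pi_i : G \to G_i$ is a homomorphism (projections are homomorphisms of reflexive directed graphs, since an arc $(x_j,y_j)_j$ of the product has $(x_i,y_i)$ an arc of $G_i$ by definition), and it sends $0$ to $x_i$ and $n$ to $y_i$. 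Hence $u \in d_{G_i}(x_i,y_i)$ for every $i$, so $u \in \bigcap_i d_{G_i}(x_i,y_i)$.

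For the reverse inclusion $\bigcap_i d_{G_i}(x_i,y_i) \subseteq d_G((x_i),(y_i))$: suppose $u := a_0\cdots a_{n-1} \in d_{G_i}(x_i,y_i)$ for every $i$. Then for each $i$ there is a homomorphism $f_i$ from $Z_u$ to $G_i$ with $f_i(0) = x_i$ and $f_i(n) = y_i$. Here the crucial point, which I expect to be the main (though still mild) obstacle, is that \emph{all} the factors are using homomorphisms whose domain is the \emph{same} zigzag $Z_u$ — this is what makes the reflexivity convention and the closure of zigzag distances under superwords matter conceptually, but here the domains already agree on the nose since we picked a single word $u$. Define $f : V(Z_u) \to V(G)$ by $f(k) := (f_i(k))_{i\in I}$. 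One checks that $f$ is a homomorphism from $Z_u$ to $G$: if $(k,k+1)$ is an arc of $Z_u$ (the case $a_k = +$; the case $a_k = -$ is symmetric), then $(f_i(k), f_i(k+1))$ is an arc of $G_i$ for every $i$ because each $f_i$ is a homomorphism, and hence $\big((f_i(k))_i, (f_i(k+1))_i\big)$ is an arc of the direct product $G$ by definition of the product. Since $f(0) = (x_i)_i$ and $f(n) = (y_i)_i$, we conclude $u \in d_G((x_i),(y_i))$. Combining the two inclusions gives $d_G = \bigvee_i d_{G_i}$ pointwise, i.e. $d_G$ is the sup-distance of the $d_{G_i}$'s, which is exactly the product distance as defined in Section 3. $\endproof$
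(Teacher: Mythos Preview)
Your proof is correct. The paper does not give an explicit proof of this corollary, treating it as an immediate consequence of the preceding proposition identifying $\mathcal G_{\mathcal D}$ with a full subcategory of $\mathcal M_{\mathcal D}$: once graph homomorphisms are the same as non-expansive maps, the categorical product in $\mathcal G_{\mathcal D}$ must coincide with the product in $\mathcal M_{\mathcal D}$, which is the sup-distance space. Your argument simply unpacks this by verifying the two inclusions directly from the definitions of zigzag distance and direct product, which is exactly the content behind the categorical statement.
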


\begin{cor}A member $(V, d)$ of $\mathcal M_{\mathcal D}$ is an absolute retract iff the distance on $V$ comes from a graph and this graph is an absolute retract in $\mathcal G_{\mathcal D}$ with respect  to isometric embedding.  \end{cor}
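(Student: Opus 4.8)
The plan is to prove the equivalence by transferring the categorical notions between $\mathcal{M}_{\mathcal D}$ and its full subcategory $\mathcal{G}_{\mathcal D}$, using the fact that absolute retracts in $\mathcal{M}_{\mathcal D}$ are exactly the hyperconvex spaces (Proposition \ref{equivalenceAR}), and that by the Corollary preceding this statement, a hyperconvex metric space over $\mathbf{F}(\Lambda^*)$ already carries a distance of the form $d_G$. So the one non-obvious direction is upward: starting from a metric space $(V,d)$ which is an absolute retract in $\mathcal{M}_{\mathcal D}$, conclude that $d=d_G$ for a graph $G$ and that $G$ is an absolute retract in $\mathcal{G}_{\mathcal D}$.

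First I would fix $(V,d)$ an absolute retract in $\mathcal{M}_{\mathcal D}$. By Proposition \ref{equivalenceAR} it is hyperconvex, hence convex, hence by the Corollary above $d=d_G$ for some reflexive directed graph $G$ on vertex set $V$; note $d_G\in\mathcal D$ by hypothesis, so $G\in\mathcal{G}_{\mathcal D}$. It remains to show $G$ is an absolute retract in $\mathcal{G}_{\mathcal D}$ with respect to isometric embeddings. Take any graph $G'\in\mathcal{G}_{\mathcal D}$ with an isometric embedding $G\hookrightarrow G'$. Viewing this as an isometric embedding of metric spaces (using the identification of $\mathcal{G}_{\mathcal D}$ as a full subcategory of $\mathcal{M}_{\mathcal D}$), absolute-retractness of $(V,d)$ in $\mathcal{M}_{\mathcal D}$ yields a non-expansive retraction $g:V(G')\to V$. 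But a non-expansive map between spaces coming from graphs \emph{is} a graph homomorphism (the Proposition stated just after Lemma \ref{metric-graph}), so $g$ is a graph retraction. Thus $G$ is a retract of every isometric extension inside $\mathcal{G}_{\mathcal D}$, i.e., an absolute retract there.

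For the converse direction, suppose $(V,d)\in\mathcal{M}_{\mathcal D}$ and $d=d_G$ with $G$ an absolute retract in $\mathcal{G}_{\mathcal D}$. I want to show $(V,d)$ is an absolute retract in $\mathcal{M}_{\mathcal D}$; by Proposition \ref{equivalenceAR} it suffices to show $(V,d)$ is hyperconvex, or equivalently injective. Given an isometric embedding $(V,d)\hookrightarrow(W,e)$ of metric spaces over $\mathcal D$, I would first want to replace $(W,e)$ by a space whose distance comes from a graph: embed $(W,e)$ isometrically into its injective envelope $(\tilde W,\tilde e)$ (Proposition \ref{prop:injectiveenvelope}), which is hyperconvex, hence by the Corollary above has $\tilde e=d_{G'}$ for a reflexive directed graph $G'$; and one checks $d_{G'}\in\mathcal D$ since on the copy of $V$ it restricts to $d_G\in\mathcal D$ and, more to the point, hyperconvex spaces over $\mathbf F(\Lambda^*)$ in fact have distances in $\mathbf N(\Lambda^*)$ — but to stay safe I would argue directly: $\tilde W$ is an absolute retract of $\mathcal M_{\mathbf F(\Lambda^*)}$, and the passage to $\mathbf N$ is only needed when $\mathcal D=\mathbf N(\Lambda^*)$, where one invokes that absolute retracts have MacNeille-closed distances (the content flagged in the introduction and Section 5). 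Then $G\hookrightarrow G'$ is an isometric embedding in $\mathcal{G}_{\mathcal D}$, so by hypothesis there is a graph retraction $G'\to G$, which is non-expansive, and composing with the embedding $W\hookrightarrow\tilde W=G'$ gives a non-expansive retraction $W\to V$. Hence $(V,d)$ is a retract of every isometric extension, i.e., an absolute retract in $\mathcal{M}_{\mathcal D}$.

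The main obstacle is the bookkeeping around which quantale $\mathcal D$ is in play in the converse: when $\mathcal D=\mathbf N(\Lambda^*)$ one must ensure the auxiliary graph $G'$ obtained from an injective envelope still lies in $\mathcal{G}_{\mathbf N(\Lambda^*)}$, i.e., that its zigzag distances satisfy the cancellation rule of Proposition \ref{cancellation}. This is exactly the point where one needs the structural input — that absolute retracts (equivalently hyperconvex spaces) over $\mathbf F(\Lambda^*)$ automatically have distances in the MacNeille completion — which the introduction announces will be established in Section 5; here it may simply be cited. Everything else is a routine transfer between the two categories using the full-subcategory identification and Propositions \ref{equivalenceAR} and \ref{prop:injectiveenvelope}.
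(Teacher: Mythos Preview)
The paper states this corollary without proof, treating it as an immediate consequence of the preceding material; your argument supplies the details and the two directions are handled correctly.

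One point deserves cleanup: your final ``main obstacle'' paragraph manufactures a difficulty that is not there. When $\mathcal D=\mathbf N(\Lambda^*)$, you should take the injective envelope $(\tilde W,\tilde e)$ \emph{inside} $\mathcal M_{\mathbf N(\Lambda^*)}$ (Proposition~\ref{prop:injectiveenvelope} applies to any $DI^2$-quantale). Then $\tilde e$ already takes values in $\mathbf N(\Lambda^*)$ by construction, and hyperconvexity over $\mathbf N(\Lambda^*)$ gives condition~(ii) of Lemma~\ref{metric-graph} because the needed radii $\uparrow u$ are principal upper cones and hence lie in $\mathbf N(\Lambda^*)$. So $\tilde e=d_{G'}$ with $G'\in\mathcal G_{\mathbf N(\Lambda^*)}$ automatically; no forward reference to Section~5 is required. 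Note also that your aside ``hyperconvex spaces over $\mathbf F(\Lambda^*)$ in fact have distances in $\mathbf N(\Lambda^*)$'' is false --- the space $(\mathbf F(\Lambda^*),d_{\mathbf F(\Lambda^*)})$ itself is hyperconvex but has values outside $\mathbf N(\Lambda^*)$ --- so it is fortunate that your argument does not actually rest on it.
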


To give a simple example of absolute retract,  take a directed path $P$ with two arcs, which passes from vertex $a$ through $b$ to vertex $c$. Assume that $P$ is isometrically embedded in some oriented graph $H$. How to construct a retraction from $H$ to $P$? Well, every vertex $x$ of $H$ is exactly one of the following types: (1) if some zigzag from $x$ to $a$ has exclusively forward arcs, then map $x$ to $a$; (2) if some zigzag from $x$ to $c$ has exclusively backward arcs, then map $x$ to $c$; (3) else, map $x$ to the intermediate vertex $b$. Note that types (1) and (2) exclude each other, for otherwise $P$ would lie on an oriented cycle through $x$ within $H$ and thus could not be isometric in $H$. For a similar reason, no vertex of type (3) can have a forward arc to a type (1) vertex or a backward arc to a type (2) vertex. This ensures that the map is a retraction onto $P$. Therefore $P$ is an absolute retract of  oriented graphs.

\begin{lem}\label{zigzag} The metric space associated to any   directed zigzag $Z$  has the extension property in the category $\mathcal M_{\mathcal D}$.  In particular, every non-expansive map sending two vertices of a reflexive directed graph $G$ on the extremities of $Z$ extends to a graph   homomorphism from $G$ to $Z$. \end{lem}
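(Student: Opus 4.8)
The plan is to verify the ``extension property'' directly via the reformulation given just after Proposition~\ref{metric-quantale}: it suffices to extend a given non-expansive map one point at a time. So let $Z := Z_w$ be the zigzag coded by a word $w := c_0 \cdots c_{k-1}$, with vertex set $\{0,1,\dots,k\}$, and let $V'$ be a metric space over $\mathcal D$, $A \subseteq V'$, and $f : A \to Z$ non-expansive; fix $x \in V' \setminus A$. I must produce a vertex $j \in \{0,\dots,k\}$ such that setting $f(x) := j$ keeps $f$ non-expansive, i.e.\ such that $d_Z(j, f(a)) \subseteq d_{V'}(x,a)$ (reverse inclusion is the order in $\mathcal D$) for every $a \in A$. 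Equivalently, writing $r_a := d_{V'}(x,a) \in \mathcal D$, I need $\bigcap_{a \in A} B_Z(f(a), r_a) \neq \emptyset$. Since $Z \in \mathcal G_{\mathcal D}$ identifies with a metric subspace of $\mathcal M_{\mathcal D}$, and — here is the point — $(\mathcal D, d_{\mathcal D})$ is hyperconvex by Proposition~\ref{metric-quantale}, I would first observe that it is enough to show $Z$ is itself hyperconvex (equivalently, by the displayed characterization, that $Z$ is convex and has the $2$-Helly property); hyperconvex spaces have the extension property as noted in the text.

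For the $2$-Helly property on a zigzag, the key structural fact is that a ball $B_Z(i, r)$ in a zigzag is always an \emph{interval} $\{i^-, i^-+1, \dots, i^+\}$ of the underlying path: indeed, whether a vertex $m$ lies in $B_Z(i,r)$ depends only on whether the code of the (unique, up to insertions) sub-zigzag of $Z$ between $i$ and $m$, together with its allowed superwords, meets $r$, and as $m$ moves monotonically away from $i$ in either direction this condition stays satisfied on an initial stretch and then fails — this is because extending a zigzag path by one more arc can only shrink the set of realizable words, and $r$ is a final segment. Granting that balls are intervals, $2$-Helly reduces to the classical Helly property of intervals on a line: pairwise-intersecting intervals have a common point. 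Convexity is the companion statement: given $d_Z(i,m) \subseteq p \oplus q$ one picks the vertex $n$ on the sub-zigzag from $i$ to $m$ ``at the place where the prefix of the connecting word first leaves $p$'', and checks $d_Z(i,n) \subseteq p$, $d_Z(n,m) \subseteq q$; this is a finite combinatorial bookkeeping on the word $w$.

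The main obstacle I anticipate is making precise and proving the claim that balls in a zigzag are intervals, because the zigzag distance $d_Z(i,m)$ is not a single word but the final segment of \emph{all} words realizing an $i$--$m$ zigzag homomorphism into $Z$, and $Z$ may be a directed zigzag with both a forward and a backward arc on some pair of vertices (Figure~\ref{directzigzag}); so one must understand exactly which words embed, and how that set changes as $m$ varies. Concretely I would set up, for each ordered pair $(i,m)$ of vertices, an explicit description of $d_Z(i,m)$ from the code $w$ (the ``canonical'' traversal word together with the operations of detouring along available arcs, yielding a finitely generated final segment), show it is monotone under reverse inclusion as $m$ recedes from $i$, and deduce the interval property; the Helly and convexity arguments are then routine. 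Once hyperconvexity of every zigzag is established, the first sentence of the lemma follows from the text's remark that hyperconvex spaces have the extension property, and the ``In particular'' clause is the special case $V' = V(G)$, $A$ a two-element set, combined with Lemma~\ref{metric-graph} which identifies non-expansive maps between (reflexive directed) graphs with graph homomorphisms.
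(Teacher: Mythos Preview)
Your proposal is correct and follows essentially the same route as the paper's proof: show that every ball in a zigzag is an interval of the underlying path, deduce the $2$-Helly property from the one-dimensional Helly theorem for intervals, verify convexity, conclude hyperconvexity, and then invoke Proposition~\ref{equivalenceAR} to obtain the extension property. The paper's proof is terser---it simply asserts that balls are intervals and that ``convexity holds trivially''---whereas you anticipate the bookkeeping needed to justify the interval claim (including the directed-but-not-oriented case) and sketch an explicit midpoint construction for convexity; but the architecture is identical. One small slip: the inequality $d_Z(j,f(a)) \subseteq d_{V'}(x,a)$ has the inclusion reversed (non-expansiveness reads $d_Z(j,f(a)) \supseteq d_{V'}(x,a)$ under reverse inclusion), and correspondingly the radii in your ball intersection should carry an involution bar; this does not affect the argument.
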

\begin{proof}
Let $Z$ be a directed zigzag (with loops). Its symmetric hull (obtained  by deleting the orientation of arcs in $Z$) is a path. The balls in $Z$ are intervals of that path, and each of these intervals  is either finite or the full path. Hence, every family of balls has the $2$-Helly property. Since convexity holds trivially, $Z$, as a metric space over $\mathbf{F}(\Lambda^*)$, is hyperconvex, hence according to   Theorem \ref{equivalenceAR}, it satisfies the extension property.
\end{proof}

\section{Isometric embedding into products of zigzags}
Being a $DI^{2}$-quantale, $\mathbf{N}({\Lambda^*})$ supports a  distance $d_{\mathbf {N}(\Lambda^*)}$ and this distance is the zigzag distance of a graph $G_{\mathbf{N} (\Lambda^*)}$. But,  it is not true that every oriented graph embeds isometrically into a power of that graph.

We will first exhibit some obvious obstructions to isometric embeddability into powers of zigzags.
\subsection{Obstructions and examples}

Note that the product of oriented zigzags is an acyclic directed graph, that is, it does not contain any directed cycle. Indeed, if there was some directed cycle $C$ of length $n$ greater than $2$  within such a product, then every projection must map $C$ onto a directed subpath of the corresponding zigzag factor. This, however, forces the image of $C$ to be a singleton.

This argument can be pushed a little bit further. For a product $Q$ of oriented zigzags, the directed cycles of length $n$ greater than $1$ are actually the only forbidden directed subgraphs. The $2$-cycle is forbidden as zigzags are oriented graphs. Since we already know that the product $Q$ is acyclic, it can be extended to a linearly ordered set  $\hat{Q}$  by  the Szpilrajn extension theorem. The power set of $V(Q)$ ordered by inclusion can be regarded as an isomorphic copy of the $V(Q)$-power  of the directed path of length $1$, into which  $\hat{Q}$ naturally embeds by associating the principal ideal with every element of $V(Q)$.


Quite another issue would be to determine the forbidden induced subgraphs for products of oriented zigzags.  Let $G$ be a subgraph of such a product and let $a$ and $b$ be vertices of $G$ joined by an arc from $a$ to $b$. For a directed path $P$ from $a$ to $b$ in $G$ of length larger than $2$,  the projection of $P$ onto any zigzag factor must equal the projection of $\{a,b\}$. For any arc $(x,y)$ of $P$ there is some zigzag factor where this arc is projected onto a non-loop. This enforces arcs $(a,y)$ and $(x,b)$ to exist in the product graph. By a trivial induction argument it follows that the subgraph induced by $V(P)$ in $G$ is transitive, that is, constitutes a linearly ordered set. This means that the products of oriented zigzags enjoy some conditional transitivity. The smallest graph that violates this condition is obtained from a directed $4$-cycle by reverting a single arc.

Thus for an undirected $4$-cycle there are only two possible orientations that lead to an oriented
graph embeddable into a product of zigzags. The first one has alternating arcs and the second one is
composed of a directed $2$-path and its reverse. For the former orientation, two zigzags suffice for an
embedding, whereas in the latter case three zigzags are needed. Further one can retract either graph to a
smaller one residing in the respective product and still including the corresponding $4$-cycle with its
orientation; see Fig. \ref{twoinjectivehull}. The vertices which were added (one or two, respectively) then "fill holes" in those $4$-vertex oriented graphs and are thus necessary for obtaining a retraction from the product. With the main
theorem of Section 6 the two graphs in Fig. \ref{twoinjectivehull} are the smallest absolute retracts containing the respective $4$-
vertex oriented graph (highlighted by the bold vertices).

\begin{figure}[h]
\centering
\includegraphics[width=1\textwidth]{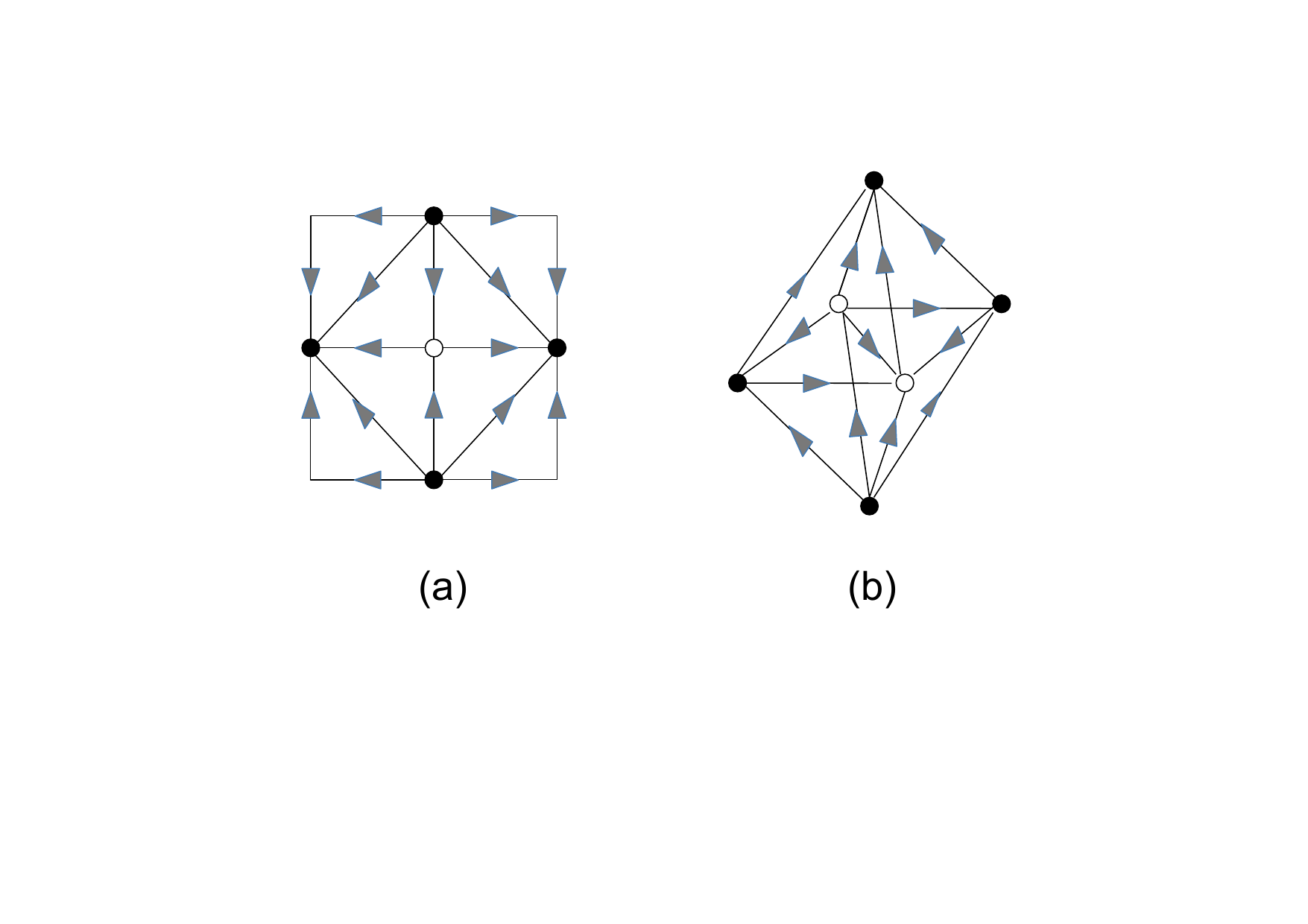}
\caption{}
\label{twoinjectivehull}
\end{figure}

\subsection{The embedding theorem}
\begin{theo}\label{theo:isometric}
	For a  directed graph   $G : = (V, \mathcal E)$ equipped with the zigzag distance,  the following properties are equivalent:
\begin{enumerate} [(i)]
	\item $G$ is isometrically embeddable into a product of  oriented zigzags;
	\item $G$ is isometrically embeddable into a power of $G_{\mathbf  {N}(\Lambda^*)}$;
	\item The values of the zigzag distance between  vertices of $V$ belong to $\mathbf{N}(\Lambda^*)$.
\end{enumerate}
\end{theo}

We may note that  the product can be infinite  even if the graph $G$ is finite. Indeed, if $G$ consists of two  vertices $x$ and $y$ with no value on the pair $\{x, y\}$ (that is the underlying graph is disconnected) then we need infinitely many zigzag of  arbitrarily long length.

\begin{proof}The proof follows the same lines as the proof of  Proposition IV-5.1 p.212 of \cite{JaMiPo}.\\
\noindent $(i)\Rightarrow (iii)$.
Let  $f$ be an isometric embedding of  $G$  into some product $\Pi_{i\in I}{G_i}$. From the definition of isometry and distance in the product we obtain the following equalities for every $x,y\in V$, 
\[(x_i)_{i\in I}:= f(x), (y_i)_{i\in I}:=f(y);\]
\[d_G(x, y) = d_{\Pi_{i\in I}{G_i}}(f(x), f(y))  = \bigvee\{d_{G_i}(x_i, y_i):  i  \in  I\}.\]
Since each $G_i$ is an oriented  zigzag,  $d_{G_i}(x_i, y_i) =\uparrow u_i$ where $u_i$ is the word associated with  the shortest zigzag $Z_{u_i}$ with initial and final vertices   $x_i$ and  $y_i$ in $G_i$. Since each set $\uparrow u_i$ is in $\mathbf{N}(\Lambda^*)$ the intersection of the $\uparrow u_i$'s, that is $d_G(x, y)$, is also  in $\mathbf{N}(\Lambda^*)$.

\noindent $(iii) \Leftrightarrow (ii)$. Apply Proposition \ref{metric-quantale}.

\noindent $(iii)\Rightarrow (i)$.  We use the following property.

\emph{For each pair  of vertices   $x,  y \in V(G)$  and each word  $u \in (d_G(x, y))^{\nabla}$, let  $Z_{u}$ be the zigzag
with end points $0$ and $n:=\ell(u)$ associated with $u$. Then, the map carrying $x$ onto $0$ and $y$ onto $n$ extends to a non-expansive map  $f_{x,y,u}$ from $G$ onto  $L_u$.}

The proof of this claim relies onto two facts. First, the distance zigzag from $\{0\}$ to $\{n\}$ in $Z_u$ is the final segment generated by the word $u$ which,  by our choice,   is smaller than $d_G(x,y)$. This  amounts to the fact that the partial map carrying $x$ onto $0$ and $y$ onto $n$ is a non-expansive map  from the subset $\{x,y\}$ of $G$ equipped with the zigzag distance into the space associated to the zigzag $Z_u$.
Now,  from Lemma \ref{zigzag},  this map extends to a non-expansive map from $G$ to  $Z_u$. \hfill $\Box$

 Let
$$G':= \Pi\{L_u: u\in (d_G(x, y))^{\nabla}\;  \text {and}\;  (x, y) \in V\times V \}.$$
The graph  $G$ is isometrically embeddable into  $G'$ by the map  $f$ defined by setting for every $z\in V$:
$$f(z):= \{f_{x,y,u}(z): u\in (d_G(x, y))^{\nabla}\;  \text {and}\;  (x, y) \in V\times V\}.$$   This map is  an isometry; indeed first, by definition of the product, it is non-expansive; next, to conclude that it is an isometry, it suffices to  check that for every $v \in \Lambda^*$, if  $d_G(x, y)\not \leq \uparrow v$ then  $d_{G'}(f(x), f(y))\not \leq \uparrow v$, that is for some triple $i:= (x',y',u)$ one has $d_{G_i}(f_i(x), f_i(y))\not \leq \uparrow u$. Let $v$ and  $x, y$ such that $d_G(x, y)\not \leq  \uparrow v $, (this amounts to  $v \not \in d_G(x, y)$). Since  $d_G(x, y)  = ((d_G(x, y))^{\nabla})^{\Delta}$ there is some   $u\in (d_G(x, y))^{\nabla}$  such that
$u \not \leq  v$. We may set $i:=(x, y, u)$.  \end{proof}
\subsection{Embeddings of absolute retracts}
\begin{theo}\label{thm:ARcompletion}
If an oriented graph $G:= (V, \mathcal E)$ is an absolute retract in the category of  oriented graphs then the values $d_G(x,y)$ of the zigzag distance $d_G$ belong to $\mathbf{N}(\Lambda^*)$, the MacNeille completion of $\Lambda^*$.
\end{theo}
\begin{proof} The proof has two major steps.
 First, we prove that $G$ has no $3$-element cycle. Second, we prove that the zigag distance between two vertices of $G$ satisfies the cancellation rule. From Proposition \ref{cancellation} it belongs to $\mathbf{N}(\Lambda^*)$.

$\bullet$ \emph{$G$ has no $3$-element cycle.}

Consider the directed cycle $C$ with three vertices and three arcs. Suppose by way of contradiction that $C$ sits inside  $G$. Then add disjoint sets $Y$ and $Z$ of equal cardinality which exceed the cardinality of $G$. Select a perfect matching between $Y$ and $Z$ and orient its edges from $Z$ to $Y$. All unmatched pairs $(y,z)$ between $Y$ and $Z$ are linked with an arc from $y$ to $z$.  Further add all forward arcs from the vertices of $C$  to the vertices of $Y$ and add arcs in the other direction from $Z$ to $C$. Add loops to the new vertices. It is easy to check that  $C$ and hence $G$ are isometrically embedded in this extension $H$ of $G$ (see Figure \ref{decoration-cycle}). By cardinality reasons,  two distinct vertices $y$ and $y'$ of $Y$ must receive the same image under a retraction $f$ from $H$ to $G$, which would be guaranteed if $G$ was an absolute retract. Then the matched companion $z \in Z$ of $y$ must be mapped onto the same vertex $v$ in $G$ as $y$ because otherwise the image would receive a directed $2$-cycle. Now, however, $v$ has a forward and backward arc to each vertex of $C$, which is impossible. We conclude that $G$ cannot include a directed $3$-cycle.\hfill $\Box$

\begin{figure}[h]
\centering

\includegraphics[width=0.6\textwidth]{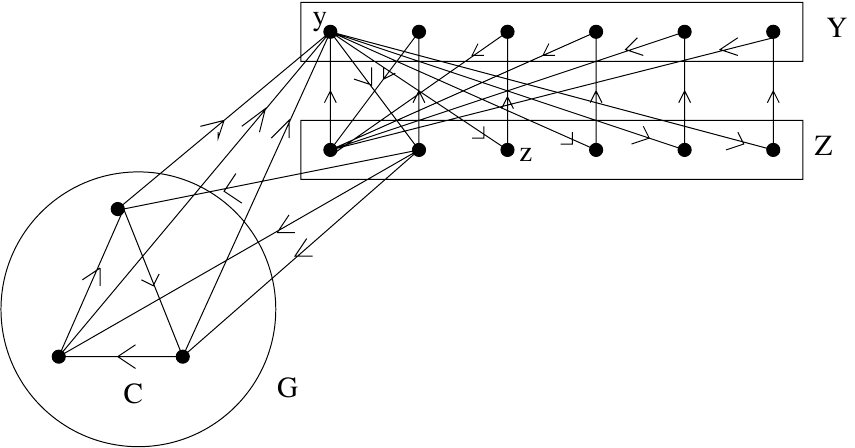}
           \caption{An isometric extension of an oriented graph containing a $3$-cycle}
              \label{decoration-cycle}
\end{figure}

$\bullet$ \emph{The zigzag distance $d_G(x,y)$ from a  vertices $x$ to a vertex $y$ satisfies the cancellation rule.}

Suppose that  $d_G(x,y)$  contains  two words  $u+v$  and  $u-v$. We claim  that it contains $uv$.  If $u$ and $v$ are the empty word then, since $G$ is oriented, $x=y$ hence $\Box \in d_G(x,y)$ and our claim holds. Without loss of generality,  we may suppose that $v\not= \Box$, hence $v= \alpha v'$ where $\alpha \in \{+,-\}$. Add to $G$ an oriented  zigzag $L_u$,  corresponding to the word $u$,   from the vertex $x$ to a new vertex $x'$,  and add an oriented  zigzag $L_{v'}$, corresponding to the word $v'$  from a vertex $y'$ to the vertex $y$. Then  glue  $x'$ to a  $3$-cycle  $\{x', z',z''\}$ and link $z',z''$ to $y'$ by two  edges, forward if $\alpha=+$, backward if $\alpha=-$  (see Figure \ref{figure-decoration2}).
The resulting graph $G'$ is oriented and, as this can be easily checked,  is an isometric extension of $G$. Since $G$ is an absolute retract, there is a retraction $f$ from $G'$ onto $G$. Since $G$ contains no $3$-element cycle, the images of the $3$-cycle $x',z',z''$ collapse to a single vertex $z$. Since $f$ is nonexpansive, $u\in d_G(x,z)$ and $v=\alpha v'\in d_G(z,y)$, hence $uv\in d_G(x,y)$ as claimed. \hfill $\Box$

\end{proof}

\begin{figure}[h]
\centering
\includegraphics[width=0.4\textwidth]{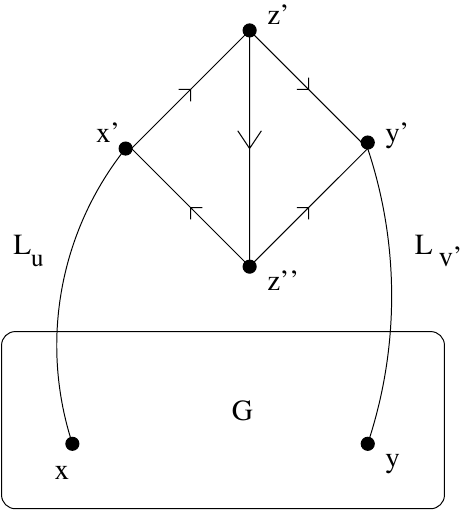}
\caption{An isometric extension with a $3$-cycle}
\label{figure-decoration2}
\end{figure}

\subsection{Injective hull}\label{injectivehull}
Any minimal absolute retract of an oriented graph with MacNeille-closed distances can also be referred to as the hyperconvex or injective hull. It would be of interest to describe all the hulls of (embeddable) small oriented graphs, say, up to $5$ vertices, thus mimicking an illustrative approach by Dress \cite{dress} in the case of ordinary metric spaces. Here is an example with $4$ vertices (see Figure \ref{twoinjectivehull}): the acyclic graph obtained by gluing together two directed paths at their corresponding end vertices. The injective hull is obtained as a retract of the product of a directed 2-path and two directed 1-paths. The two vertices added for the hull are indispensable because either of them fills a "hole" in the $4$-vertex graph. Their connection by an arc is then the only way to arrive at an absolute retract of oriented graphs.


\section{The main result}
Here,  we characterize absolute retracts among antisymmetric graphs.

\begin{theo}\label{theo-antisym}
	Let   $G: = (V, \mathcal E)$ be a   graph and $\Lambda: = \{+, -\}$. The following properties are equivalent:
	\begin{enumerate}[{(i)}]
	\item $G$ is an absolute retract in the category of  oriented graphs, with isometries as an approximation of coretractions;
	\item $G$ is injective in the category of  oriented graphs, with isometries as an approximation of coretractions
	\item $G$ is  an oriented graph and has the extension property among oriented graphs;
	\item G is an oriented graph  and the family of balls   $B(x, \uparrow r)$ with center $x\in V$ and radius $ \uparrow r \in  \mathbf {N}(\Lambda^*)$ with $r\in \Lambda^*$ has the $2$-Helly property;
	\item $G$ is a retract of a product of oriented  zigzags;
	\item  $G$ is a retract of a power of the oriented graph $G_{\mathbf{N}(\Lambda^*)}$ defined on $\mathbf{N}(\Lambda^*)$.
	\end{enumerate}
	\end{theo}
	Theorem \ref{theo-antisym} rectifies Th\'eor\`eme IV-3.1 stated in Jawhari, Misane and Pouzet \cite{JaMiPo}.

\begin{proof}
We prove the sequence of implications
$(vi)\Rightarrow (v) \Rightarrow (iv)\Rightarrow  (iii)\Rightarrow (ii)\Rightarrow (i)\Rightarrow (vi)$.
The substantial part of the proof is the implication $(i)\Rightarrow (vi)$.
We start with it.
Let $G$ be an absolute retract in the category of  oriented graphs. According to Theorem \ref{thm:ARcompletion}, the values $d_G(x,y)$ of the zigzag distance $d_G$ belong to $\mathbf{N}(\Lambda^*)$, the MacNeille completion of $\Lambda^*$. According to Proposition \ref{metric-quantale}, the space $(G, d_G)$ embeds isometrically into a power of the space $(\mathbf {N}(\Lambda^*), d_{\mathbf{N}(\Lambda^*)})$. This means that w.r.t. the zigzag distance, the graph $G$ isometrically embeds into the graph associated with this power of $(\mathbf{N}(\Lambda^*), d_{\mathbf{N}(\Lambda^*)})$. According  to Corollary  \ref{lem:product},  this graph is a  power of the graph $G_{\mathbf{N}(\Lambda^*)}$ associated to $(\mathbf{N}(\Lambda^*), d_{\mathbf{N}(\Lambda^*)})$. Since $G$ isometrically embeds into a power of $G_{\mathbf{N}(\Lambda^*)}$ and $G$ is an absolute retract, it is a retract of this power, hence $(vi)$ holds.

$(vi)\Rightarrow (v)$. First, $G$ is an absolute retract in the category of graphs whose values of the zigzag distance belongs to  $\mathbf {N}(\Lambda^{\ast})$. Indeed, since $\mathbf {N}(\Lambda^{\ast})$ is a $DI^{2}$-quantale, absolute retracts in the category of metric spaces over $\mathbf {N}(\Lambda^*)$ are retracts of powers of  the metric space  $(\mathbf {N}(\Lambda^*), d_{\mathbf{N}(\Lambda^*)})$ (Proposition  \ref{equivalenceAR}).  Hence,  absolute retracts among graphs whose zigzag distance belongs to  $\mathbf {N}(\Lambda^{\ast})$ are retracts of power  of $G_{\mathbf{N}(\Lambda^*)}$.  Since $G$ is a retract of a power of $G_{\mathbf{N}(\Lambda^*)}$, it is an absolute retract as claimed and furthermore   it embeds isometrically in that power.  By implication $(ii)\Rightarrow (i)$ of Theorem  \ref{theo:isometric}, it embeds isometrically into a product of  oriented zigzags. Since the metric spaces associated to these  zigzags belong to the category of metric spaces over $\mathbf{N}(\Lambda^*)$ and $G$ is an absolute retract in this category,  $G$ is a retract of a product of oriented zigzags, hence $(v)$ holds.

 $(v)\Rightarrow (iv)$.  For a reflexive oriented zigzag $Z$, the family of ball   $B(x, \uparrow r)$ with center $x\in V(Z)$ and radius  $\uparrow r$ with  $r\in  \Lambda^*$ has the $2$-Helly property (Lemma \ref{zigzag}).  This property being
preserved by products and retracts, it holds for $G$. Hence, $(iv)$ holds.

 $(iv)\Rightarrow  (iii)$. Suppose that $(iv)$ holds. Let $G'$ be a reflexive oriented graph, let $A$ be a subset of $V(G')$,  $f$ be a non-expansive map w.r.t. the zigzag distance from $A$ to $G$ and $x\in V(G')\setminus A$. The map  $f$ extends to $A\cup \{x\}$. Indeed, let $\mathcal B:= \{B(f(y),  \uparrow r): r \in d_{G'}(y, x), y\in A\}$. This family of balls has the $2$-Helly property. Hence, its intersection is non-empty. Pick $z$ in this intersection and set $f(x):= z$. This is an extension of $f$ to $A\cup \{x\}$.
 Hence $(iii)$ holds.

$(iii) \Rightarrow (ii)$. Let $G'$ be an oriented graph, $H$ be an isometric subgraph  of $G'$,  $f$ be a non-expansive map w.r.t. the zigzag distance from $H$ to $G$. Since $G$ has the extension property, we may extend $f$, a vertex after an other, to a non-expansive map from $G'$ to $G$. This proves that $G$ is injective. Hence $(ii)$ holds.

 $(ii)\Rightarrow (i)$. Let $G$ be an  injective  oriented graph. Let $G'$ be an oriented  isometric extension of $G$ w.r.t. the zigzag distance. The identity map on $V(G)$ is  non-expansive; since $G$ is injective, this map extends to $G'$ to a non-expansive map onto $G$. Hence $G$ is a retract and $(i)$ holds.
\end{proof}

If one considers directed graphs, not necessarily oriented, there is  a similar characterization.   If we drop Item $(v)$ and if in  $(vi)$  we replace the graph $G_{\mathbf{N}(\Lambda^*)}$   by the graph  $G_{\mathbf{F}(\Lambda^*)}$ defined on the collection $\mathbf {F}(\Lambda^*)$ of final segments of $\Lambda^{*}$, the above equivalences hold  (see Theorem 3 of \cite{kabil-pouzet}). We may extend these equivalences  to an analog of Item $(v)$, but it is  not sufficient to replace  oriented zigzags by  directed zigzags. Zigzags have to be replaced   by graphs associated to injective envelopes of $2$-element metric spaces whose distance between end-points is the complement in $\Lambda^{*}$ of a principal initial segment of  $\Lambda^{*}$ (see Theorem 4 of \cite{kabil-pouzet}).

A consequence of Theorem \ref{theo-antisym} is the following.

\begin{cor}
In the  following five categories,  the   absolute retracts are the same:  the categories of directed graphs with distance values in  $\mathbf {N}(\Lambda^{*})$,  of oriented graphs, of metric spaces over $\mathbf {N}(\Lambda^{*})$, of directed graphs and of metric spaces  over ${\mathbf {F}(\Lambda^{*})}$.
\end{cor}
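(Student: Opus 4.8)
The plan is to show that all five categories share the same class of absolute retracts by chaining together the equivalences already proved, exploiting the fact that the pertinent categories are full subcategories of one another and that absolute retractness is an "intrinsic" property in these chains. First I would recall the overarching picture: we have the inclusions of categories (as full subcategories of $\mathcal M_{\mathbf F(\Lambda^*)}$)
$$\mathcal G_{\mathbf N(\Lambda^*)}^{\mathrm{ori}} \subseteq \mathcal G_{\mathbf F(\Lambda^*)}^{\mathrm{ori}}=\text{(oriented graphs)},\qquad \mathcal G_{\mathbf N(\Lambda^*)} \subseteq \mathcal G_{\mathbf F(\Lambda^*)}=\text{(directed graphs)},$$
together with $\mathcal M_{\mathbf N(\Lambda^*)}\subseteq \mathcal M_{\mathbf F(\Lambda^*)}$, and that by Proposition~\ref{equivalenceAR} absolute retracts in $\mathcal M_{\mathcal D}$ are exactly the retracts of powers of $(\mathcal D, d_{\mathcal D})$.

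The key steps, in order, would be: (1) Identify absolute retracts in $\mathcal M_{\mathbf N(\Lambda^*)}$ as retracts of powers of $(\mathbf N(\Lambda^*), d_{\mathbf N(\Lambda^*)})$, i.e.\ of powers of the graph $G_{\mathbf N(\Lambda^*)}$ (Corollary~\ref{lem:product}); this is the "standard model." (2) By Theorem~\ref{theo-antisym}, equivalences $(i)\Leftrightarrow(vi)$, an oriented graph is an absolute retract among oriented graphs iff it is a retract of a power of $G_{\mathbf N(\Lambda^*)}$ — so the oriented-graph absolute retracts coincide with the $\mathcal M_{\mathbf N(\Lambda^*)}$ absolute retracts. (3) For directed graphs with values in $\mathbf N(\Lambda^*)$: since $\mathcal G_{\mathbf N(\Lambda^*)}$ is a full subcategory of $\mathcal M_{\mathbf N(\Lambda^*)}$ closed under products and containing $G_{\mathbf N(\Lambda^*)}$ itself, its absolute retracts are again the retracts of powers of $G_{\mathbf N(\Lambda^*)}$; moreover any such retract is automatically oriented (it has MacNeille-closed distances, hence by Theorem~\ref{thm:ARcompletion}'s first step — the absence of $3$-cycles, generalized — or more directly because $G_{\mathbf N(\Lambda^*)}$ is oriented and orientedness is inherited by subgraphs and products). (4) For directed graphs and for $\mathcal M_{\mathbf F(\Lambda^*)}$: invoke the remark following Theorem~\ref{theo-antisym} (the analogue with $G_{\mathbf F(\Lambda^*)}$, Theorem~3 of \cite{kabil-pouzet}) — an absolute retract among directed graphs is a retract of a power of $G_{\mathbf F(\Lambda^*)}$; then observe that being an absolute retract forces the distance values into $\mathbf N(\Lambda^*)$ and the graph to be oriented, so it lands back in case~(3). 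The cleanest way to see this last point is: an absolute retract among directed graphs is in particular an absolute retract of every isometric extension, including extensions by the $3$-cycle and cancellation-rule gadgets of Theorem~\ref{thm:ARcompletion}, whose proof only used orientedness of the \emph{ambient} retract nowhere essential — one re-runs that argument. Finally, glue the four identifications together through the common description "retract of a power of $G_{\mathbf N(\Lambda^*)}$."

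The main obstacle, as I see it, is step~(4): showing that an absolute retract in the larger category of \emph{all} directed graphs (or in $\mathcal M_{\mathbf F(\Lambda^*)}$) is necessarily oriented and has MacNeille-closed distances, so that it collapses into the oriented/$\mathbf N$ world. The subtlety is that Theorem~\ref{thm:ARcompletion} was stated for oriented $G$, and its proof (the $3$-cycle decoration and the cancellation gadget) used that $G$ is antisymmetric at the very start ("since $G$ is oriented, $x=y$"). So one must either (a) prove directly that an absolute retract among directed graphs cannot contain a $2$-cycle — which follows because a $2$-cycle on $\{x,y\}$ gives $d(x,y)\supseteq\{+,-\}$, and one can build an isometric extension (adding a long zigzag realizing some value strictly below $\uparrow +\cap\uparrow-$ that is not in $\mathbf N(\Lambda^*)$, e.g.\ via the cancellation failure $+-\notin\{+\}^{\nabla}\cup\{-\}^{\nabla}$) to which no retraction exists — or (b) more economically, cite Theorem~3 of \cite{kabil-pouzet} directly, which already characterizes directed-graph absolute retracts as retracts of powers of $G_{\mathbf F(\Lambda^*)}$, and then note $G_{\mathbf F(\Lambda^*)}$'s absolute-retract subobjects with finite-type (MacNeille) distances are precisely the retracts of powers of $G_{\mathbf N(\Lambda^*)}$, using Proposition~\ref{cancellation} to pin down when a final segment lies in $\mathbf N(\Lambda^*)$. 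I would pursue route~(b) for brevity, making the corollary essentially a bookkeeping exercise over Theorem~\ref{theo-antisym}, Theorem~\ref{thm:ARcompletion}, Proposition~\ref{equivalenceAR}, and the cited companion results.
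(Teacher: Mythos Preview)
Your plan for the forward direction (an absolute retract of oriented graphs is an absolute retract in each of the other four categories) is essentially what the paper does, though the paper argues more directly: from Theorem~\ref{theo-antisym}$(iv)$ the oriented absolute retract has the $2$-Helly property for balls $B(x,\uparrow r)$ with $r\in\Lambda^{*}$, and then one simply invokes that this very property characterizes injectivity (hence absolute retractness) in the categories of directed graphs and of metric spaces over $\mathbf F(\Lambda^{*})$ (Theorem~3 of \cite{kabil-pouzet}); the $\mathbf N(\Lambda^{*})$ cases follow from Proposition~\ref{equivalenceAR} and the fact that the category of metric spaces over $\mathbf N(\Lambda^{*})$ has enough injectives. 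Notice that the paper's argument is \emph{one-directional}: it starts from ``if $G$ is an absolute retract in the category of oriented graphs'' and never attempts the converse.

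Your step~(4), by contrast, tries to prove that every absolute retract in the category of directed graphs (or in $\mathcal M_{\mathbf F(\Lambda^{*})}$) is oriented and has MacNeille-closed distances. This is where there is a genuine gap: that implication is \emph{false}. Take $G$ to be the two-vertex reflexive directed graph with both arcs $(0,1)$ and $(1,0)$. Then $d_G(0,1)=\Lambda^{*}\setminus\{\Box\}$, and $G$ is hyperconvex over $\mathbf F(\Lambda^{*})$ (balls in a two-point space trivially satisfy $2$-Helly, and convexity is immediate since any factorization $p\oplus q\geq d_G(0,1)$ forces $\Box\notin p$ or $\Box\notin q$). Hence $G$ is an absolute retract in the category of directed graphs and in $\mathcal M_{\mathbf F(\Lambda^{*})}$. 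But $G$ is not oriented, and $d_G(0,1)=\Lambda^{*}\setminus\{\Box\}$ fails the cancellation rule of Proposition~\ref{cancellation} (take $u=v=\Box$), so $d_G(0,1)\notin\mathbf N(\Lambda^{*})$. Neither of your routes (a) or (b) can circumvent this: the decoration arguments of Theorem~\ref{thm:ARcompletion} genuinely require antisymmetry of $G$, and route~(b) tacitly restricts to graphs with MacNeille-closed distances, which is precisely what is at issue.

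So the paper's proof establishes only the inclusion ``oriented absolute retracts $\subseteq$ absolute retracts of each of the five categories''; the phrase ``are the same'' in the corollary should be read in that one-directional sense (or with the two $\mathbf F(\Lambda^{*})$ categories dropped). Your attempt to supply the missing reverse inclusion cannot succeed as stated.
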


Indeed, if $G$ is an  absolute retract  in the category  of oriented graphs, then it follows from Theorem \ref{theo-antisym}  that $G$ is an absolute retract into the category of directed graphs with distance values in  $\mathbf {N}(\Lambda^{*})$. Since the category of metric spaces over $\mathbf {N}(\Lambda^{*})$ has enough injectives, and injectives are metric spaces associated with graphs, then $G$, or rather the metric space associated to $G$,  is an absolute retract in this category. Since by $(iv)$ of Theorem  \ref{theo-antisym} the family of balls   $B(x, \uparrow r )$ with center $x\in V$ and radius  $\uparrow  r $ with $r\in  \Lambda^*$ has the $2$-Helly property  and this property is a characteristic property of injectivity in the categories of directed graphs and metric spaces  over ${\mathbf {F}(\Lambda^{*})}$ (apply Theorem 3 of \cite{kabil-pouzet}), the graph   $G$ is an absolute retract in these categories.

\section{Concluding remarks}
The category of oriented graphs is a full subcategory
 of the category of directed graphs. Several features of absolute retracts are thus shared. The main difference is that the former category has not enough injectives and the latter has much more complicated indecomposable absolute retracts which serve as factors in a product representation \cite{kabil-pouzet}. The general approach of \cite{JaMiPo} worked well for oriented graphs except that the modification on $\mathbf{N}(\Lambda^{*})$ by deleting one particular upper set of words does not yield a $DI^{2}$-quantale. The necessary correction as well as other results along a similar vein, notably for multigraphs,  were  obtained and exposed in Chapter V of  \cite{Sa}.

 A reader may wonder why to study absolute retracts of oriented graphs. A reason, also valid for the study of absolute retracts of generalized metric  spaces, is the  fixed point property. A metric space has the \emph{fixed point property} (f.p.p. for short) if every non-expansive map has a fixed point. An elementary fact is that retracts of metric spaces with the f.p.p. inherit it. Thus, if there are many spaces with the f.p.p., absolute retracts are good candidates. The fixed-point theorems of Tarski for complete lattices  and of Sine and Soardi  for bounded hyperconvex ordinary spaces illustrate this point (see \cite{espinola-khamsi}). A graph  has the \emph{fixed point property} if every edge-preserving map  has a fixed point. Clearly, such a graph must be oriented. In a recent paper  \cite{khamsi-pouzet}, the authors give an other illustration, proving that on retracts of powers  of  a finite oriented zigzag every commuting set of edge-preserving maps has a common fixed point. If the oriented zigzag  has only two vertices, this is an equivalent statement to the Tarski fixed-point theorem for commuting set of order preserving maps on a  complete lattice.  It remains to know if the transitive closure of a retract of a product of oriented zigzags  is a retract of a product of fences.

\section*{Acknowledgement} The first author thanks  the  Institut Camille Jordan, CNRS UMR 5208 for supporting his stay in Lyon in February 2019. The second author thanks    the members of the Maths Department of the University of Hamburg for their hospitality during his stay in February 2016.

\end{document}